\def\Or{          \mathcal O}
\def\Rc{          \mathcal R}
\def\E{          \mathcal E}
\def\D{          \mathcal D}
\def\a{         \alpha}
\def\b{         \beta}
\def\T{          \mathcal T}
\def\presup#1{{}^{#1}\kern-.10em\relax} 
\def\presub#1{{}_{#1}\kern-.12em\relax}       
\newcommand{\NN}{{\mathbb N}}
\newcommand{\RR}{{\mathbb R}}
\newcommand{\TT}{{\mathbb T}}
\newcommand{\ZZ}{{\mathbb Z}}
\newtheorem{theo}{\sc Theorem}[section]
\newtheorem{lemma}[theo]{\sc Lemma}
\newtheorem{sublemm}[theo]{\sc Sublemma}
\newtheorem{lemm}[theo]{\sc Lemma}
\newtheorem{coro}[theo]{\sc Corollary}
\newtheorem*{property}{\sc Property}
\theoremstyle{definition}
\theoremstyle{remark}
\newtheorem*{rema}{\bf Remark}
\newtheorem{ques}{\bf Question}
\numberwithin{equation}{section}
\begin{document}
\title{Schmidt games and Markov partitions}
\author{Jimmy Tseng}
\address{Jimmy Tseng, Department of Mathematics, Brandeis University, Waltham, MA 02454} 
\email{jtseng@brandeis.edu}

\begin{abstract}Let $T$ be a $C^2$-expanding self-map of a compact, connected, $C^\infty$, Riemannian manifold $M$.  We correct a minor gap in the proof of a theorem from the literature:  the set of points whose forward orbits are nondense has full Hausdorff dimension.  Our correction allows us to strengthen the theorem. 

Combining the correction with Schmidt games, we generalize the theorem in dimension one:  given a point $x_0 \in M$, the set of points whose forward orbit closures miss $x_0$ is a winning set.
\end{abstract}


\maketitle

\section{Introduction}

Let $T:M\rightarrow M$ be a $C^2$-expanding self-map of a compact, connected, $C^\infty$, Riemannian manifold $M$ with volume measure $\sigma$.  In this note, we study the set of points whose forward orbits are nondense.  There is an ergodic $T$-invariant probability measure equivalent to $\sigma$~\cite{KS}.  Consequently, this set has zero volume by the Birkhoff ergodic theorem, but it also has full Hausdorff dimension (equal to $\dim M$)~\cite{Ur}.\footnote{In addition to~\cite{Ur}, see Subsections~\ref{subsecOOP} and~\ref{subsecCorrect} below.}  As a result, this set is small in terms of measure, but large in terms of Hausdorff dimension.  In particular, it is uncountable.  

There are a number of similar theorems where one investigates the character of nondense orbits for systems with some hyperbolic behavior.  For example, there is a theorem for the homogeneous space $H:=SL_2(\RR)/SL_2(\ZZ)$ which uses the classical result that the set of badly approximable numbers\footnote{Recall that a number $\a \in \RR$ is \textbf{badly approximable} if there exists a constant $C(\a) >0$ such that \[|\a - p/q| > \frac{C(\a)}{q^2}\] for all $p, q \in \ZZ$ and $q \neq 0$.} has full Hausdorff dimension.  There is a bijection between a real number $\a$ and the element $\Gamma_\a:=
\left( \begin{array}{ccc}
1 & 0 \\
\a & 1
\end{array} \right) SL_2(\ZZ)$ of $H$.  The number $\a$ is badly approximable if and only if the forward orbit of $\Gamma_\a$ under the flow $g_t:=
\left( \begin{array}{ccc}
e^{-t} & 0 \\
0 & e^{t}
\end{array} \right)$ misses a neighborhood of $\{\infty\}$.  One can now easily conclude that the set of points in $H$ with bounded forward orbits under $g_t$ has full Hausdorff dimension~\cite{Da}.  A number of generalizations of this theorem exist (see, for example, \cite{Da}, \cite{KM2}, and \cite{Kl}).

For compact manifolds, M. Urba\'nski has a number of results~\cite{Ur}.  One of these is that, for certain Anosov diffeomorphisms, the set of points whose orbits are nondense has full Hausdorff dimension.  Another one of his results, and a chief concern of this note, is 
\begin{theo}\label{thmUrbexpanding}Let $T$ be as above.  If $V$ is a nonempty open subset of $M$, then the Hausdorff dimension of the set of all points contained in $V$ whose forward orbits under $T$ are nondense in $M$ equals $\dim M$.
\end{theo}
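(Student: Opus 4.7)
The plan is to reduce the theorem to a quantitative ``orbit avoids a small ball'' statement. For any $p \in M$ and $r > 0$, let
\[
E(V,p,r) \;:=\; \{\,x \in V : T^n(x) \notin B(p,r) \text{ for all } n \ge 0\,\}.
\]
Every $x \in E(V,p,r)$ has forward orbit closure missing $p$, hence nondense, so the set in Theorem~\ref{thmUrbexpanding} contains $\bigcup_{k \in \NN} E(V,p,1/k)$. By countable stability of Hausdorff dimension it suffices to show $\dim_H E(V,p,r) \to \dim M$ as $r \to 0$ (the upper bound $\dim_H \le \dim M$ is trivial).

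To exhibit a large subset of $E(V,p,r)$, I would invoke the classical fact (going back to Bowen) that a $C^2$-expanding self-map of a compact manifold admits finite Markov partitions $\mathcal R = \{R_1,\ldots,R_k\}$ of arbitrarily small mesh. Fix such a partition with mesh small enough that (i) some element $R_0$ lies in $V \setminus \overline{B(p,r)}$, and (ii) the proportion $|I_{\text{bad}}|/k$ is small, where $I_{\text{bad}} := \{i : R_i \cap B(p,r) \neq \emptyset\}$. Within $R_0$, form the Cantor set
\[
C_r \;:=\; \bigcap_{N \ge 0}\; \bigcup_{i_1,\ldots,i_N \notin I_{\text{bad}}}\; [\,i_0,i_1,\ldots,i_N\,],
\]
where $i_0$ is the label of $R_0$ and the union runs over admissible Markov cylinders whose interior symbols avoid $I_{\text{bad}}$. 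By the Markov property, any $x \in C_r$ satisfies $T^j(x) \in R_{i_j}$, which is disjoint from $B(p,r)$ for all $j \ge 0$; thus $C_r \subset E(V,p,r)$.

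The dimension of $C_r$ is then estimated in Moran fashion. Bounded distortion and uniform $C^2$-expansion give each level-$N$ cylinder a diameter comparable to $\lambda^{-N}$ for some $\lambda > 1$, and applying a mass distribution principle to a Bowen--Gibbs equilibrium state supported on the restricted Markov subshift $\Sigma_{\text{good}}$ (sequences in $\{1,\ldots,k\} \setminus I_{\text{bad}}$) yields $\dim_H C_r \ge h_{\text{top}}(T|_{\Sigma_{\text{good}}})/\log \lambda$. As $r \to 0$, the proportion $|I_{\text{bad}}|/k$ tends to $0$, so by continuity of the Perron eigenvalue of the transition submatrix the restricted entropy approaches $h_{\text{top}}(T)$. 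Combined with Bowen's formula $\dim M = h_{\text{top}}(T)/\log \lambda$ (or its pressure-theoretic generalization when the expansion rate is non-constant) this gives $\dim_H C_r \to \dim M$. The main obstacle is precisely this last step: cleanly quantifying the entropy loss upon excluding the bad symbols, controlling distortion uniformly in $N$, and matching the resulting Moran-type estimate to the correct Bowen pressure formula requires careful thermodynamic formalism, particularly in the anisotropic setting where the single expansion rate $\lambda$ no longer suffices.
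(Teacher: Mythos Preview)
Your reduction to showing $\dim_H E(V,p,r) \to \dim M$ as $r \to 0$ is the right starting point, and encoding the avoidance condition symbolically via a Markov partition is exactly what the paper does. But the dimension estimate you sketch has a genuine gap beyond what you already flag as the ``main obstacle.'' Level-$N$ cylinders for a general $C^2$-expanding map are \emph{not} of diameter comparable to a fixed $\lambda^{-N}$: their sizes are governed by the Jacobian of $T^N$ along the orbit, which varies from cylinder to cylinder (bounded distortion controls the variation \emph{within} a single cylinder, not across different ones). Correspondingly, the identity $\dim M = h_{\mathrm{top}}(T)/\log\lambda$ is false unless the expansion is constant; the measure of maximal entropy is in general singular with respect to volume, so its entropy and the Lyapunov exponent of volume do not match. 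Thus even if you could show that the Perron eigenvalue of the restricted transition matrix converges to that of the full one --- and ``continuity of the Perron eigenvalue'' is not quite the right tool here, since you are deleting rows and columns from matrices of growing size, not perturbing fixed entries --- this would not by itself yield $\dim_H C_r \to \dim M$. What you would actually need is convergence of the Bowen pressure root for the geometric potential on the restricted subshift, which is a substantially harder statement and is not addressed in your sketch.

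The paper bypasses thermodynamic formalism entirely. It fixes one Markov partition and, rather than forbidding bad \emph{symbols} in an ever-finer partition, forbids a single long \emph{word} $\gamma \in \Sigma(q)$: the set $E(q)$ consists of points whose coding contains no occurrence of $\gamma$ as a substring. The admissible cylinders of generation $kq$ form a strongly tree-like family $\{E_k\}$, and a Frostman-type lemma (Lemma~\ref{lemmMMUr}) gives
\[
\dim_H E(q) \;\ge\; \dim M \;-\; \limsup_{k\to\infty}\frac{\sum_{j=1}^k \log\Delta_j}{\log d_k},
\]
where $\Delta_j$ is the minimal \emph{volume} density of $\cup E_{j+1}$ inside an element of $E_j$. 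Everything then reduces to bounding $\Delta_j$ below by a constant independent of $q$ --- and this is precisely the gap the paper is written to repair: one must control all \emph{partial} matches of $\gamma$ at the tail of an admissible string. This is done via a purely combinatorial ``No Matching'' lemma (Lemma~\ref{lemmPartMatch}), after which bounded distortion alone yields the uniform lower bound on $\Delta_j$ and hence $\dim_H E(q) \ge \dim M - O(1/q)$. The paper also outlines a shorter alternative correction, due to Urba\'nski and closer in spirit to your idea of passing to a fine partition: replace $T$ by $T^q$, so that the forbidden region becomes a single symbol and the density bound is immediate, and then recover nondensity under $T$ itself via the Baire category theorem.
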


The proofs of the various results in~\cite{Ur} are elegant, but they all contain a (essentially the same) minor gap.  There are two corrections of this gap for Theorem~\ref{thmUrbexpanding}.  The first one, by the current author, will be discussed in detail and proved in Subsection~\ref{subsecCorrect} below.  The second one, by Mariusz Urba\'nski, the original author of the theorem, will be outlined in Subsection~\ref{subsecOOP}.  Corrections for the other results should be very similar to these two.

The other chief concern of this note is a result of S. G. Dani concerning certain nondense orbits of endomorphisms of tori.  To state Dani's theorem and one of our results, we must first summarize Schmidt games.

W. Schmidt introduced the games which now bear his name in~\cite{Sch2}.  Let $0 < \a <1$ and $0 < \b<1$.  Let $S$ be a subset of a complete metric space $M$.  Two players, Black and White, alternate choosing nested closed balls $B_1 \supset W_1 \supset B_2 \supset W_2 \cdots$ on $M$.  The radius of $W_n$ must be $\a$ times the radius of $B_n$, and the radius of $B_n$ must be $\b$ times the radius of $W_{n-1}$.  The second player, White, wins if the intersection of these balls lies in $S$.  A set $S$ is called \textbf{$(\a, \b)$-winning} if White can always win for the given $\a$ and $\b$.  A set $S$ is called {\bf $\a$-winning} if White can always win for the given $\a$ and any $\b$.  Schmidt games have four important properties for us~\cite{Sch2}:

\begin{property}[SG1] The sets in $\RR^n$ which are $\a$-winning have full Hausdorff dimension.
\end{property}

\begin{property}[SG2] Countable intersections of $\a$-winning sets are again $\a$-winning.\end{property}

\begin{property}[SG3] If a set is $\a$-winning, then it is also $\a'$-winning for all $0 < \a' \leq \a$.
\end{property}

\begin{property}[SG4]  Let $0 < \a \leq 1/2$.  If a set in a Banach space of positive dimension is $\a$-winning, then the set with a countable number of points removed is also $\a$-winning.
\end{property}

We may now precisely state Dani's result from~\cite{Da2}:

\begin{theo}\label{thmDani}  Let $f$ be a semisimple, surjective linear endomorphism of the torus $\TT^n:= \RR^n /\ZZ^n$ where $n \geq 1$.  The set of points whose forward orbit closures miss the identity element $0$ in $\TT^n$ is $1/2$-winning.
\end{theo}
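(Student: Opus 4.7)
The plan is to produce, for any $\beta \in (0,1)$, a winning strategy for White in the $(1/2,\beta)$-Schmidt game on $\TT^n$. Lifting $f$ to the universal cover $\RR^n$, I represent it by an integer matrix $A$ with $\det A \neq 0$. Semisimplicity yields an $A$-invariant splitting $\RR^n = V_u \oplus V_c \oplus V_s$ whose eigenvalues have moduli greater than, equal to, and less than $1$, respectively; in an adapted norm, $A|_{V_u}$ expands by at least a factor $\lambda > 1$, $A|_{V_c}$ is isometric, and $A|_{V_s}$ contracts. I may assume $V_u \neq \{0\}$: otherwise $|\det A|=1$ forces $V_s=\{0\}$ as well, Kronecker's theorem makes all eigenvalues roots of unity, $A$ is of finite order, and the target set is the complement in $\TT^n$ of a finite $A$-invariant set, to be handled separately.

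At round $n$, given $B_n$ of radius $\rho_n$, White's strategy is to select $W_n \subset B_n$ of radius $\rho_n/2$ such that (i) $W_n$ is disjoint from the preimages $f^{-k}\bigl(B(0,\delta)\bigr)$ for every $k$ in a block $[k_n,k_{n+1})$, and (ii) $W_n$ is positioned with a one-step look-ahead that forces Black's next ball $B_{n+1}\subset W_n$ away from the preimages relevant to round $n+1$ (this look-ahead is needed because $\alpha=1/2$ admits no slack in centering $W_n$). Here $\delta>0$ is a small constant depending only on $A$ and $\beta$, and $k_n$ is the unique integer with $\lambda^{k_n}\rho_n\in[1,\lambda)$. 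The key geometric input is a counting lemma: within $B_n$, the preimage $f^{-k}\bigl(B(0,\delta)\bigr)$ is contained in a bounded number of slabs perpendicular to $V_u$ whose total $V_u$-width is at most $C\delta\rho_n$, with $C=C(A)$ independent of $k$ and $n$. Since there are only $O(\log(2/\beta)/\log\lambda)$ indices $k$ per block, choosing $\delta$ small enough in terms of $\beta$ leaves sufficient room in $B_n$ for a valid $W_n$ realizing both (i) and (ii). Iterating forces $f^k(x)\notin B(0,\delta)$ for every $k\geq k_1$, and finitely many small $k$ are cleaned up by analogous moves in the opening rounds.

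The step I anticipate as the main obstacle is combining the counting lemma with the look-ahead requirement (ii) in the presence of the neutral subspace $V_c$. Because $A|_{V_c}$ is isometric, preimages of $B(0,\delta)$ do not shrink in the $V_c$-directions as $k$ grows, and the projection of $\ZZ^n$ onto $V_c$ may fail to be discrete, owing to Salem-type eigenvalues on the unit circle. Semisimplicity --- the absence of Jordan blocks on $V_c$ --- is essential for keeping the slab count and the constant $C(A)$ bounded uniformly in $k$; matching this bound with the tight positional requirements of the $\alpha=1/2$ game is correspondingly delicate. Executing this combined geometric and positional analysis consistently across all scales is the technical heart of the argument.
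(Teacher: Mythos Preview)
The paper does not prove this statement. Theorem~1.2 is quoted verbatim as Dani's result from~\cite{Da2}; it appears in the introduction purely as background and motivation, and no proof (or even sketch) is given anywhere in the paper. So there is no ``paper's own proof'' against which to compare your proposal.

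That said, a brief comment on your sketch. The overall architecture --- lift to $\RR^n$, use the $A$-invariant splitting $V_u\oplus V_c\oplus V_s$, and steer White's balls away from slabs coming from $f^{-k}(B(0,\delta))$ using the expansion on $V_u$ --- is indeed the shape of Dani's original argument, and your identification of the neutral subspace $V_c$ as the locus of difficulty (and of semisimplicity as the hypothesis that tames it) is accurate. Two small points. First, your reduction when $V_u=\{0\}$ is slightly misstated: what you actually get is $|\det A|\geq 1$ (it is a nonzero integer), which together with all eigenvalues of modulus $\leq 1$ forces them all onto the unit circle; then Kronecker plus semisimplicity give finite order. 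In that case $|\det A|=1$, so $f$ is an automorphism and the complement of the target set is just $\{0\}$, not merely ``a finite $A$-invariant set.'' Second, the phrase ``slabs perpendicular to $V_u$'' is imprecise when $V_c\neq\{0\}$: the preimages are thickened affine subspaces parallel to $V_c\oplus V_s$, and the uniform bound on their number within $B_n$ is really a statement about the lattice $A^{-k}\ZZ^n$ projected to $V_u$. Getting that count uniform in $k$ is exactly where the work lies, and your proposal acknowledges but does not yet carry it out.
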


Finally, we note that there are some interesting results where one considers points whose orbits (eventually) avoid certain uncountable sets~\cite{Do2}.

\subsection{Statement of Results}

We give a correction for the proof of Theorem~\ref{thmUrbexpanding} and note that our proof allows us to show a stronger theorem than the original:  

\begin{theo}\label{thmfullHD1} Let $T$ be as above.  Given $x_1, \cdots, x_p \in M,$ the set of points whose forward orbit closures miss $x_1, \cdots, x_p$ has full Hausdorff dimension (i.e. $=\dim M$).
\end{theo}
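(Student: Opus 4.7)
The plan is to reduce Theorem~\ref{thmfullHD1} to a single application of the corrected construction of Subsection~\ref{subsecCorrect}. A point $x \in M$ has forward orbit closure disjoint from $\{x_1, \ldots, x_p\}$ if and only if there exists $r > 0$ such that $T^n x \notin U_r$ for every $n \geq 0$, where $U_r := \bigcup_{i=1}^p B(x_i, r)$. Writing $E_r := \{x \in M : T^n x \notin U_r \text{ for all } n \geq 0\}$, the target set equals the nested union $\bigcup_{r > 0} E_r = \bigcup_{k \geq 1} E_{1/k}$; by countable stability of Hausdorff dimension, it suffices to prove $\dim_H E_r \to \dim M$ as $r \to 0$.

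For the lower bound on $\dim_H E_r$, I would fix a Markov partition $\mathcal{R}$ for $T$ of mesh smaller than $r$, pick a partition element $R_0 \subset M \setminus U_r$, and mimic the Moran-type construction of Subsection~\ref{subsecCorrect}. At generation $n$, retain only those cylinders $R_0 \cap T^{-1}R_{i_1} \cap \cdots \cap T^{-n}R_{i_n}$ each of whose $T^j$-image avoids $U_r$ for $0 \leq j \leq n$; equivalently, retain the combinatorial sequences $(i_0, i_1, \ldots, i_n)$ all of whose symbols correspond to Markov elements disjoint from $U_r$. By the bounded distortion estimate for $C^2$-expanding maps, surviving cylinders at generation $n$ are comparable to Euclidean balls of diameter $\asymp \lambda^{-n}$ (where $\lambda > 1$ is an expansion constant for $T$), and each surviving parent retains a proportion of its Markov children bounded below by $1 - K\sigma(U_r)$, with $K$ independent of $n$.

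A standard Moran-type estimate (or the mass distribution principle applied to the natural Bernoulli measure on the resulting Cantor set) then gives
\[
\dim_H E_r \;\geq\; \dim M \;+\; \frac{\log\bigl(1 - K\sigma(U_r)\bigr)}{\log \lambda}.
\]
Since $\sigma(U_r) \leq c p\, r^{\dim M}$ tends to $0$ with $r$, the right-hand side tends to $\dim M$, completing the proof.

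The main obstacle is the uniform distortion control across all generations $n$: without it, the constant $K$ could blow up with $n$ and the estimate would degenerate. This is precisely the point where the original argument has its gap, and the correction in Subsection~\ref{subsecCorrect} supplies the required uniformity using the $C^2$ hypothesis on $T$ (essentially a Koebe-style bound on the Jacobian of inverse branches of $T^n$ restricted to Markov cylinders).
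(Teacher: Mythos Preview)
Your density claim ``each surviving parent retains a proportion of its Markov children bounded below by $1 - K\sigma(U_r)$'' is false, and this is exactly the phenomenon the paper's Third Example (Subsection~\ref{subsecOOP}) is built to expose. When you refine a Markov partition to mesh $< r$, the transition matrix becomes sparse: a typical letter $j$ has only a few admissible successors, and $T(R_j)$ is a small union of partition elements rather than all of $M$. If one of those few successors happens to meet $U_r$, the parent loses a \emph{fixed} fraction of its volume (one half in the $m_2$ dyadic example), not a fraction comparable to $\sigma(U_r)$. Thus $\Delta_k$ can be as small as $1/2$ no matter how small $r$ is, and Lemma~\ref{lemmMMUr} yields only $HD(E_r)\geq 0$. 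The quantity you would actually need to control is $\sigma(R_{\alpha_n}\cap T^{-1}(\text{bad set}))/\sigma(R_{\alpha_n})$, and the denominator $\sigma(R_{\alpha_n})$ shrinks with the mesh, so no uniform $K$ exists.

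You have also misidentified what the gap in the original proof is. Bounded distortion (Theorem~\ref{thmBDI}) \emph{is} the Koebe-type Jacobian bound you describe, and it was never in doubt---it is a standard consequence of the $C^2$ hypothesis. The actual gap is purely combinatorial: the original construction checks that the forbidden word $\gamma$ does not occur at positions $0,q,2q,\ldots$ but neglects intermediate positions, so a partial match at the tail of $\alpha$ can complete to a full match after concatenation. The paper's correction keeps the Markov partition \emph{fixed}, forbids the length-$q$ word $\gamma$, and invokes the No Matching Lemma (Lemma~\ref{lemmPartMatch}) to show that at each step one can kill all pending partial matches by prescribing at most $2sP$ letters; the remaining letters are free, giving $\Delta_k\geq\varepsilon(2sP)/C$ independent of $q$, while $d_k\leq\delta_T\lambda^{-qk}$. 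Sending $q\to\infty$ in Lemma~\ref{lemmMMUr} then gives full dimension. Your letter-avoidance scheme with a varying partition is closer in spirit to Urba\'nski's alternative correction (replace $T$ by $T^q$ so that one step already spans the forbidden word), but even there one does not let the partition mesh go to zero; one fixes the partition and lets $q\to\infty$.
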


\begin{rema}  A similar result, proved using a line of reasoning different from that in~\cite{Ur} or this note, can be found in~\cite{AN}.  The proof in~\cite{AN} uses higher dimensional nets and Kolmogorov's consistency theorem from probability theory, while the proofs in~\cite{Ur} and this note are based on elementary properties of Markov partitions.  See Section~\ref{secConclude} for a discussion of Theorem~\ref{thmfullHD1} and the result in~\cite{AN}.
\end{rema}

Let us now describe the proof scheme of Theorem~\ref{thmUrbexpanding} found in~\cite{Ur}.  The Markov partition associated with $T$ is used to encode the dynamics of $T$ with left shifts of  certain infinite strings.  A point in $M$ corresponds to at least one infinite string, and the action of $T$ on this point corresponds to the action of the left shift operator on this string.  Thus, the Markov partition provides a semi-conjugacy from a subshift of finite type to $(M, T)$.  To avoid open neighborhoods of a point $x_0$, one must construct infinite strings that do not have certain finite ``bad'' strings, which correspond to neighborhoods of $x_0$, as substring.  To finish, a slightly adapted lemma of C. McMullen (which itself is an application of Frostman's lemma) is used to try to show that the set of these infinite strings corresponds to a set of points in $M$ of full Hausdorff dimension~\cite{Ur}.

The minor gap in this proof is that these infinite strings are checked to not have bad strings as substrings only in certain positions.  Other positions are not checked, and hence some of the infinite strings thus constructed will contain bad strings as substring.  The positions that are checked are specific.  The author's correction, then, is to handle checking generic positions, which is made possible by Lemma~\ref{lemmPartMatch} (the No Matching lemma) below.  Urba\'nski's correction, on the other hand, is to replace $T$ by an appropriate power and deduce the result for $T$ using the Baire category theorem.  

While Urba\'nski's correction is much shorter than the author's, the author's correction has an important, felicitous benefit.  By using Lemma~\ref{lemmPartMatch}, one can extract precise information on how to construct these infinite strings:  enough information to play Schmidt games.  Hence, via the author's correction, we obtain our other result:

\begin{theo}\label{thmWin1}  Let $M$ be the circle $S^1:=\RR/\ZZ$ and $T$ be as above.  Given a point $x_0 \in M$, the set of points whose forward orbit closures miss $x_0$ is $\a$-winning for some $0 < \a \leq 1/2$.
\end{theo}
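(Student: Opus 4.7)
The plan is to recast the Schmidt game as a symbolic game on the Markov partition of $S^1$ associated with $T$. Let $\{R_1,\dots,R_k\}$ be this partition and let $1<\lambda\le\Lambda$ be uniform lower and upper bounds for $|T'|$. Each $x\in S^1$ has an itinerary $(i_0,i_1,\dots)$ with $T^j x\in R_{i_j}$, and the $C^2$ hypothesis yields the standard bounded distortion estimate: the length of the level-$n$ cylinder $[i_0,\dots,i_{n-1}]$ lies in an interval of the form $[c_1\Lambda^{-n},\,c_2\lambda^{-n}]$, with constants independent of $x$ and $n$. Fix $x_0\in S^1$; by taking a small open neighborhood $U$ of $x_0$ built out of sufficiently deep cylinders, the condition that the forward orbit of $x$ avoid $U$ translates, via the Markov semiconjugacy, into the condition that the itinerary of $x$ contain no word from a fixed finite family $\F$ of bad substrings, at any position. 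It therefore suffices for White to force the limit itinerary to lie in the set $\Sigma_\F$ of sequences with no substring in $\F$.

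White's strategy. Choose $\a\in(0,1/2]$ small enough that every closed interval $B\subset S^1$ of radius $r$ contains an admissible Markov cylinder of diameter at least $3\a r$; this is possible by the geometric decay of cylinder lengths. At Black's $n$-th move, given $B_n$ of radius $r_n$, White selects an admissible cylinder $C_n=[w_0,\dots,w_{m_n-1}]\subset B_n$ of the smallest level $m_n$ for which $C_n$ fits inside $B_n$. White then invokes Lemma~\ref{lemmPartMatch} (the No Matching lemma) to append a uniformly bounded block of symbols $(w_{m_n},\dots,w_{m_n+\ell_n-1})$, producing a deeper admissible cylinder $C_n'\subseteq C_n$ whose defining word, when concatenated with the previously chosen prefix, contains no element of $\F$ as a substring at any position, and in particular no bad word spanning the junction between the old and the new symbols. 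White trims $\ell_n$ within a bounded range so that the radius of $C_n'$ equals $\a r_n$, and sets $W_n:=C_n'$.

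The construction then iterates: since the game rules force $B_{n+1}\subset W_n$, Black's next ball again contains an admissible cylinder, and White again applies Lemma~\ref{lemmPartMatch} to append a safe block. The intersection $\bigcap_n B_n$ is a single point $x\in S^1$ whose full itinerary is the limit of the growing safe prefixes chosen by White, hence lies in $\Sigma_\F$; its forward orbit avoids $U$, and therefore $x_0$. This exhibits an $\a$-winning strategy for the set in question.

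The technical heart of the proof is the bounded-length extension furnished by Lemma~\ref{lemmPartMatch}: precisely the generic-position issue that Urba\'nski's original argument did not address. One must guarantee that the newly appended block creates no bad substring either internally or at its junction with the existing admissible prefix, and must do so for \emph{every} admissible prefix that White may have generated at the previous move. The uniform bound on $\ell_n$, independent of the round, is what permits $\a$ to be fixed once and for all in terms of $\lambda$, $\Lambda$, the Markov partition, and the maximum length of a word in $\F$, and is thus the step I expect to be the main obstacle.
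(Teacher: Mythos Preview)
Your overall plan---fit Markov cylinders inside the game balls and use the No Matching Lemma to append safe blocks---is the paper's approach, but there is a genuine gap in your execution. You fix the neighborhood $U$, and hence the forbidden-word family $\mathcal{F}$, \emph{before} the game begins; this fails for small $\beta$. Lemma~\ref{lemmPartMatch} guarantees only that after White's controlled block $b^0b^1$ (of length $\leq 2s$), any further extension of total length at most the length of $\gamma$ still contains no match of $\gamma$. But after White plays $W_n\subset C_n'$, Black shrinks by the factor $\beta$, and the cylinder $C_{n+1}$ that you next locate inside $B_{n+1}$ extends the prefix of $C_n'$ by roughly $-\log(\alpha\beta)/\log\lambda$ symbols. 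For $\beta$ small this exceeds the length of every word in $\mathcal{F}$; since Black chooses where $B_{n+1}$ sits inside $W_n$, Black effectively controls those new symbols and can force a forbidden substring to appear. Your inductive hypothesis that $C_n$ already contains no word of $\mathcal{F}$ therefore breaks down, and the lemma cannot be applied.

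The paper resolves this by letting the forbidden word depend on $\beta$. After a preliminary phase of $L$ rounds (with $L$ depending on $\beta$), White reads off an integer $N$ from the current ball and selects $Q>N$ so that the single forbidden word is $\gamma(Q)$; a sublemma then shows, via the inequality $m\geq \tfrac{r\,\varepsilon(1)}{2C^2}\lambda^{-Q/2}$ extracted from that preliminary phase, that each subsequent round advances the symbolic prefix by at most $Q$ letters, so one invocation of Lemma~\ref{lemmPartMatch} per round really does suffice. Different $\beta$ produce different $Q$ and hence different neighborhoods of $x_0$, but the union over $Q$ of the resulting avoidance sets still lies in $F_T(x_0)$, which is the set being shown winning. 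Two smaller points you also gloss over: cylinder lengths are discrete, so one must take $W_n\subset C_n'$ rather than $W_n=C_n'$; and Lemma~\ref{lemmPartMatch} has exceptional strings, so $Q$ must in addition be chosen so that $\gamma(Q)$ is non-exceptional and has $\gamma_{Q-1}$ nondegenerate, which the paper arranges by inspecting the block structure of $\gamma$ near index $N$.
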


Using the properties of Schmidt games, we obtain 

\begin{coro}\label{coroWin1} Let $\T$ be any finite set of $C^2$-expanding self-maps of $S^1$ and $A \subset S^1$ be any countable set. Then the set of points whose forward orbit closures under any map in $\T$ that miss $A$ is $\a$-winning for some $0 < \a \leq 1/2$.
\end{coro}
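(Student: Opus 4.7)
The strategy is to express the set in question as a countable intersection and invoke the Schmidt game properties SG2 and SG3. For $T\in\T$ and $a\in A$, let
\[W_{T,a}:=\{x\in S^1 : a\notin\overline{\{T^n x : n\ge 0\}}\}.\]
The set whose winning character we wish to establish is precisely
\[W:=\bigcap_{T\in\T}\bigcap_{a\in A}W_{T,a},\]
which, since $\T$ is finite and $A$ is countable, is a countable intersection.

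The first step is to obtain a single $\a\in(0,1/2]$ for which every factor $W_{T,a}$ is $\a$-winning. By Theorem \ref{thmWin1}, each $W_{T,a}$ is $\a_{T,a}$-winning for some $\a_{T,a}\in(0,1/2]$. The plan is to revisit the proof of Theorem \ref{thmWin1} and verify that the constant produced there depends only on data attached to $T$ (the associated Markov partition, its expansion constant, etc.), and not on the missed point $a$. Granting this, write $\a_T$ for the common value and set
\[\a:=\min_{T\in\T}\a_T,\]
which is positive because $\T$ is finite (and is at most $1/2$ by construction). By property SG3, each $W_{T,a}$ is then $\a$-winning.

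The second step is an immediate application of SG2: a countable intersection of $\a$-winning sets is again $\a$-winning, so $W$ is $\a$-winning, as required.

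The main obstacle is the uniformity in $a$ in the first step. If the winning parameter furnished by Theorem \ref{thmWin1} were allowed to degenerate as $a$ ranges over $A$, one would lose the common $\a$ needed to feed into SG2, and SG3 alone would not recover it. Hence the crux is to ensure that White's strategy, as constructed in the proof of Theorem \ref{thmWin1}, can be executed with a fixed $\a$ depending only on $T$, regardless of which target point it is designed to avoid; once this is in hand, the corollary follows purely formally from SG2 and SG3.
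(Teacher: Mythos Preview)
Your proposal is correct and follows essentially the same approach as the paper. The paper makes the uniformity in the target point explicit in its precise restatement (Theorem~\ref{thmWin2}), where the winning constant is given as $\frac{\varepsilon(7s+2)}{2C}$, a quantity depending only on the chosen Markov partition for $T$; the corollary (stated more precisely as Corollary~\ref{cor1DimWin}) then follows exactly as you describe, by taking the minimum over the finitely many maps and invoking SG3 and SG2.
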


\begin{rema}Hence, we have generalized in dimension one Theorems~\ref{thmUrbexpanding} and~\ref{thmfullHD1} (and also the aforementioned result in~\cite{AN}) and (in part) Theorem~\ref{thmDani}.  See Theorem~\ref{thmWin2} and Corollary~\ref{cor1DimWin} below for more precise statements of Theorem~\ref{thmWin1} and Corollary~\ref{coroWin1} respectively.\end{rema}

The gist of the proof of Theorem~\ref{thmWin1} is ``fitted descent.''   On one hand, as one plays a Schmidt game, one forms a nested sequence of closed intervals over which one has partial control.  On the other hand, the desired infinite strings are created recursively from longer and longer finite strings, which form a corresponding nested sequence of closed intervals; partial control for these intervals is given by Lemma~\ref{lemmPartMatch}.  Fitted descent is the idea that one must fit, carefully, these nested intervals together.  In the end, one obtains that the desired set is winning.

\section{The Original Proof}

In this section, we outline Urba\'nski's original proof from~\cite{Ur}, explicitly describe and illustrate with instructive examples the minor gap in this proof, and outline Urba\'nski's correction.  To begin, we must recall the basic properties of Markov partitions and describe a lower bound for Hausdorff dimension.

\subsection{The Basics of Markov Partitions}\label{subsecMPBasics}  Much of this subsection will follow the development in~\cite{Ur}.  Let $M$ and $\sigma$ be as above.  If $A \subset M$, let us denote its topological closure in $M$ by $\overline{A}$.  A $C^1$-map $f:M\rightarrow M$ is \textbf{expanding}\footnote{The definition of expanding only requires that $f$ be $C^1$.  This note, however, considers only $C^2$-expanding maps.} if (perhaps after a smooth change of Riemannian metric) there exists a real number $\lambda >1$ such that \[\|D_xf(v)\| \geq \lambda \|v\|\] for all $x \in M$ and for all $v$ in the tangent space of $M$ at $x$~\cite{KS}.  Recall that our map $T$ is $C^2$-expanding.  A \textbf{Markov partition for $T$} is a finite collection $\Rc := \{R_1, \cdots, R_s\}$ of nonempty subsets of $M$ such that 
\begin{align}
	\label{MP1} & M = R_1 \cup \cdots \cup R_s \\
	\label{MP2} & R_j = \overline{\textrm{Int}R_j} \textrm{ for every } j = 1, \cdots, s \\
	\label{MP3} & \textrm{Int}R_i \cap \textrm{Int}R_j = \emptyset \textrm{ for all } 1 \leq i \neq j \leq s \\
	\label{MP4} & \sigma(R_j \backslash \textrm{Int}R_j) = 0 \textrm{ for all }j= 1, \cdots, s \\
	\label{MP5} & \textrm{For every } j \in \{1, \cdots, s\}, T(R_j) \textrm{ is a union of elements of } \Rc.
\end{align}

The \textbf{diameter} of a Markov partition is the maximum diameter over all its elements.  Because $T$ is expanding, $T$ is injective on any set $A \subset M$ if diam$(A)$ is smaller than a constant $\delta_T>0$.  A Markov partition \textbf{with small diameter} is a Markov partition whose diameter $< \delta_T$.  All Markov partitions in this note have small diameters (except for one, which we will indicate).  We assume, therefore, that $\Rc$ has small diameter.  For a proof of the existence of Markov partitions with small diameters, see~\cite{KS}.   Even more, there exist Markov partitions for $T$ which have diameters as small as one likes (\cite{KS} and~\cite{Ru}).  

Let us enumerate properties of the Markov partition $\Rc$ with small diameter.  The \textbf{transition matrix} of $\Rc$ is the $s \times s$ matrix given by \[A_{i,j}:= \left\{ \begin{array}{ll}
1 & \textrm{if } T(R_i) \cap \textrm{Int}R_j \neq \emptyset\\
0 & \textrm{if } T(R_i) \cap \textrm{Int}R_j = \emptyset
\end{array} \right.\] for any pair $i, j \in \{1, \cdots, s\}$.

We will explicitly describe below how the transition matrix for $\Rc$ allows us to encode the dynamics of $T$ by using sequences of elements of $\{1, \cdots, s\}$.  We will on occasion refer to this encoding and these sequences as the symbolic description (of $T$).  First we need some notation.  We refer to the set $\{1, \cdots, s\}$ as an \textbf{alphabet}\footnote{In this note, we assume that all alphabets have at least two letters.} (or, in particular, the alphabet for $\Rc$) and its elements as \textbf{letters}.  A \textbf{string} is a bi-infinite, infinite, or finite sequence of the letters of the given alphabet.  Thus, every element of a string has at most one predecessor and at most one successor.  A \textbf{valid} string is a string given by the transition matrix $A$ as follows:  for every element $i$ of the string with a successor $j$, $A_{i,j}=1$.  Let $h \leq t$ be integers.  A \textbf{$(h,t)$-string} $\a$ is a string $\a_h \a_{h+1} \cdots \a_t$ with the given indices, and a \textbf{substring} of $\a$ is a string $\a_i \cdots \a_j$ where $h \leq i \leq j \leq t$.  Also, given $h \leq i \leq j \leq t$, the \textbf{$(i,j)$-substring} of $\a$ is the string $\a_i \cdots \a_j$.  An $(i,j)$-string $\gamma$ is a \textbf{(extrinsic) substring} of $\a$ if there exists a $(k,k+j-i)$-substring of $\a$ such that $\a_k=\gamma_i, \a_{k+1} = \gamma_{i+1}, \cdots, \a_{k+j-i} = \gamma_j$.  For convenience, $(0,t)$-strings will also be called \textbf{$t$-strings}.  

For $n \in \NN \cup \{0\}$, let $\Sigma(n)$ denote the set of valid $n$-strings.  For $\a \in \Sigma(n)$, define \[R_\a := R_{\a_0} \cap T^{-1}(R_{\a_1}) \cap \cdots \cap T^{-n}(R_{\a_n}).\]  Thus, for all $n \in \NN \cup \{0\}$, $R_\a \neq \emptyset$ and has the following properties (see~\cite{Ur} and~\cite{KS}):  
\begin{align}
\label{MP6} & \cup_{\a \in \Sigma(n)}R_\a = M \\
\label{MP7} & R_\a = \overline{\textrm{Int}R_\a} \\
\label{MP8} & \textrm{Int}R_\a \cap \textrm{Int}R_\b = \emptyset \textrm{ for every distinct pair } \a, \b \in \Sigma(n) \\
\label{MP9} & T(R_\a) = R_{\a_1 \cdots \a_n} \\
\label{MP10} & T^{-1}(R_\a) = \cup_{\{i \mid A_{i,\a_0}=1\}} R_{i \a} \\
\label{MP11} & R_\a = \cup_{\{i \mid A_{\a_n,i}=1\}} R_{\a i} \\
\label{MP12} & \sigma(R_\a \backslash \textrm{Int}R_\a) = 0 \\
\label{MP13} & \textrm{diam}(R_\a) < \delta_T \lambda^{-n}.
\end{align}

The final property, which follows, is the important bounded distortion property.  Let $J(T)(x) = |\det D_x T| $ denote the Jacobian of $T$ at the point $x$.  Given a Borel set $A \subset M$ on which $T$ is injective, we have \[\sigma(T(A)) = \int_A J(T) d\sigma.\]  The bounded distortion property is the following theorem (\cite{Ur} and see~\cite{KS} for a proof):
\begin{theo} \label{thmBDI}
There exists a constant $C \geq 1$ such that \[\frac{J(T^n)(y)}{J(T^n)(x)} \leq C\] for all $n\geq1$, $\a \in \Sigma(n)$,  and $x, y \in R_\a$.
\end{theo}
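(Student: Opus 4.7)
The plan is to reduce the claim to a telescoping sum via the chain rule, and then use the exponential contraction built into the Markov partition to sum a geometric series. Writing $J(T^n)(z)=\prod_{k=0}^{n-1} J(T)(T^k z)$ and taking logarithms,
\[
\log\frac{J(T^n)(y)}{J(T^n)(x)} \;=\; \sum_{k=0}^{n-1}\bigl[\log J(T)(T^k y)-\log J(T)(T^k x)\bigr],
\]
so the task is to bound this sum uniformly in $n$, $\alpha$, and $x,y\in R_\alpha$.

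The crucial geometric input is property (MP9): if $\alpha\in\Sigma(n)$ and $x,y\in R_\alpha$, then for every $0\le k\le n-1$ both $T^kx$ and $T^ky$ lie in $R_{\alpha_k\cdots\alpha_n}$, which by (MP13) has diameter strictly less than $\delta_T\lambda^{-(n-k)}$. So the two points at which we evaluate $\log J(T)$ in the $k$-th summand are within distance $\delta_T\lambda^{-(n-k)}$ of each other in $M$.

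Next I would combine this with regularity of $\log J(T)$. Since $T$ is $C^2$ and expanding, $DT$ is $C^1$ and $|\det DT|$ is a strictly positive continuous function on the compact manifold $M$; hence $\log J(T)$ is $C^1$, and in particular is Lipschitz with some constant $L<\infty$ with respect to the Riemannian distance. Plugging this in bounds the $k$-th summand by $L\delta_T\lambda^{-(n-k)}$, and summing over $k$ gives a geometric tail controlled by $L\delta_T/(\lambda-1)$, independent of $n$ and $\alpha$. Exponentiating and taking $C:=\exp\bigl(L\delta_T/(\lambda-1)\bigr)$ yields the stated inequality.

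The only step with any subtlety is establishing the uniform Lipschitz bound on $\log J(T)$, and this is precisely where the $C^2$ hypothesis (rather than merely $C^1$) is used: one needs $DT$ itself to be $C^1$ so that $\det DT$ is $C^1$, and $\lambda>1$ together with compactness of $M$ guarantees that $J(T)$ stays bounded away from zero so that taking logarithms is harmless. Once that regularity is in place, the rest is the routine geometric-series estimate sketched above.
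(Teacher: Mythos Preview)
Your argument is correct and is precisely the standard bounded distortion proof for $C^2$ expanding maps. The paper itself does not supply a proof of this theorem but simply cites \cite{KS} (and \cite{Ur}); the argument in \cite{KS} is essentially the one you have written: chain rule, logarithm, Lipschitz control of $\log J(T)$ from the $C^2$ hypothesis and compactness, then the geometric series coming from the exponential shrinking of $\operatorname{diam}(R_{\alpha_k\cdots\alpha_n})$.
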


\noindent This theorem will allow us to bound ratios of volumes as Lemma~\ref{lemmBDII} below shows.

Finally, let $\Sigma(\infty)$ denote the set of valid infinite strings $\a_0 \a_1 \cdots$ indexed by $\NN \cup \{0\}$.  If $\a \in \Sigma(\infty)$, then $R_\a$ is a unique point in $M$.  Conversely, if $x \in M$, then there exists an $\a \in \Sigma(\infty)$ such that $x = R_\a$.  Therefore, we have a semi-conjugacy from the subshift of finite type $(\Sigma(\infty), \textrm{ the left shift operator})$ onto $(M, T)$.  A \textbf{representation} of $x \in M$ is an element $\a \in \Sigma(\infty)$ for which $x = R_\a$.  A representation may not be unique.

\subsection{A Lower Bound for Hausdorff Dimension} In this subsection, we follow a simplified version of the development in~\cite{Ur}.  Let $K \subset M$ be compact.  For $k \in \NN$, let $E_k$ denote a finite collection of compact subsets of $K$ with positive volume. (Recall that volume measure is denoted by $\sigma$.)  We require the following to hold:
\begin{align}
\label{MM1} & \textrm{The union of the elements of } E_1 \textrm{ is } K. \\
\label{MM2} & \textrm{For distinct } F, G \in E_k, \sigma(F \cap G) = 0. \\
\label{MM3} & \textrm{Every element }F \in E_{k+1} \textrm{ is contained in an element } G \in E_k.	
\end{align}
Let us define the following notation:
\begin{itemize} 
\item Let $\cup E_k$ denote the union of all elements of $E_k$.
\item Let $E := \cap_{k=1}^\infty \cup E_k.$
\item Define, for every $F \in E_k$, \[\textrm{density}(E_{k+1}, F):= \frac{\sigma(\cup E_{k+1} \cap F)}{\sigma(F)}.\]
\item Let $\Delta_k := \inf\{\textrm{density}(E_{k+1}, F) \mid F\in E_k\}.$
\item Let $d_k := \sup\{\textrm{diam}(F) \mid F \in E_k\}$.
\end{itemize}  

We further require the following to hold:
\begin{align}
\label{MM4} & \Delta_k > 0 \\
\label{MM5} & d_k < 1 \\
\label{MM6} & \lim_{k \rightarrow \infty} d_k =0.
\end{align}

Following~\cite{Kl}, let us call $\{E_k\}_{k \in \NN}$ a \textbf{strongly tree-like} collection.  Let $HD(\cdot)$ denote Hausdorff dimension.  The following lemma for this strongly tree-like collection is proved in~\cite{Ur} by adapting a proof from~\cite{MM} (both proofs are based on Frostman's lemma):

\begin{lemm} \label{lemmMMUr} It holds that \[HD(E) \geq \dim M - \limsup_{k \rightarrow \infty} \frac{\sum_{j=1}^k \log \Delta_j}{\log d_k}.\]
\end{lemm}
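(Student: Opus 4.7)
The plan is to adapt the classical Frostman/mass-distribution approach, following McMullen: construct a Borel probability measure $\mu$ concentrated on $E$, derive a uniform upper bound on $\mu(B(x,r))$ in terms of $r$, and then read off the asserted lower bound on $HD(E)$ from the local-dimension characterization of Hausdorff dimension.

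First I would build $\mu$ recursively on the nested tree $\{E_k\}$. Set $\mu(F_1) := \sigma(F_1)/\sigma(K)$ for every $F_1 \in E_1$; by \eqref{MM1} and \eqref{MM2} the total mass is $1$. For $F \in E_k$ and each $G \in E_{k+1}$ with $G \subset F$, put
\[
\mu(G) := \mu(F) \cdot \frac{\sigma(G)}{\sigma(\cup E_{k+1} \cap F)},
\]
which is legitimate by \eqref{MM4}. A standard compactness (or Kolmogorov-consistency) argument extends $\mu$ to a Borel probability measure concentrated on $E$. Iterating the definition along the nested chain $F =: F_k \subset F_{k-1} \subset \cdots \subset F_1$ and applying $\sigma(\cup E_{j+1} \cap F_j) \geq \Delta_j \, \sigma(F_j)$, the intermediate volumes telescope to yield the key estimate
\[
\mu(F) \leq \frac{\sigma(F)}{\sigma(K) \, \prod_{j=1}^{k-1} \Delta_j}
\qquad (F \in E_k).
\]

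Next, for $x \in E$ and small $r>0$, use \eqref{MM6} to select the unique $k=k(r)$ with $d_k \leq r < d_{k-1}$. Every $F \in E_k$ meeting $B(x,r)$ has diameter $\leq d_k \leq r$ and therefore lies in $B(x,2r)$; such $F$'s are essentially $\sigma$-disjoint by \eqref{MM2}, and the Riemannian volume comparison on $M$ gives $\sigma(B(x,2r)) \leq C \, r^{\dim M}$ for a constant $C=C(M)$. Summing the previous bound over these $F$'s yields
\[
\mu(B(x,r)) \leq \frac{C \, r^{\dim M}}{\sigma(K) \, \prod_{j=1}^{k-1} \Delta_j}.
\]
Taking logarithms, dividing through by $\log r < 0$, and using $r < d_{k-1}$ (i.e.\ $|\log r| > |\log d_{k-1}|$) to pass from $\log r$ to $\log d_{k-1}$ in the $\Delta_j$-sum, one obtains
\[
\liminf_{r \to 0^+} \frac{\log \mu(B(x,r))}{\log r} \; \geq \; \dim M - \limsup_{k \to \infty} \frac{\sum_{j=1}^{k} \log \Delta_j}{\log d_k}
\]
uniformly in $x \in E$. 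The mass distribution principle (Frostman's lemma) then delivers the desired lower bound on $HD(E)$, since $\mathrm{supp}(\mu) \subset E$.

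The main technical obstacle is the sign bookkeeping in the final step: because $\log r$, $\log d_k$, and $\sum \log \Delta_j$ are all negative, the choice of $k(r)$ must be made carefully, namely using the strict side $r < d_{k-1}$ rather than the more natural-looking $d_k \leq r$, so that after dividing by $\log r$ the inequality is oriented correctly and the resulting quotient is bounded above by a $\limsup$ over $k$ in the form that appears in the statement. A minor secondary point worth verifying is that the nested closed sets $F_1 \supset F_2 \supset \cdots$ through a point of $E$ need not shrink to that point as a set, but this is harmless since $\mu$ is evaluated only on balls and is only required to be supported on $E$.
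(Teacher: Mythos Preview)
Your proposal is correct and follows precisely the Frostman/mass-distribution argument \`a la McMullen that the paper invokes (the paper does not give its own proof of this lemma but cites \cite{Ur} and \cite{MM}, noting only that the upper index of summation should be $k$ rather than $k-1$). Your careful handling of the sign bookkeeping via $r < d_{k-1}$ is exactly what produces, after the reindexing $k' = k-1$, the correct upper index $k$ in the $\limsup$, consistent with the paper's remark.
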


\begin{rema}  The upper index of summation is $k$ (not $k-1$ as in~\cite{Ur}).  See~\cite{Kl}, but note that what is referred to as the ``$j$-th stage density'' must be $> 0$.  For a more general version of this lemma, see~\cite{Kl} or~\cite{Ur}.  For a version involving higher dimensional nets, see~\cite{AN}.
\end{rema}

\subsection{An Outline of the Original Proof}\label{subsecOOP}  In this subsection, we setup the proof of Theorem~\ref{thmUrbexpanding} following pages 390-391 of~\cite{Ur}, explicitly describe and illustrate with three types of examples the minor gap, and outline Urba\'nski's correction.

Fix a Markov partition $\Rc$ whose diameter is small enough.  Fix a $q \in \NN$ large enough, and fix a $\gamma \in \Sigma(q)$ such that $\gamma_0 \neq 1$.  Define \begin{align} E_k := E_k(q) := \{& R_\a \mid \a \in \Sigma(kq), \a_0 = 1, \textrm{ and } T^n(R_\a) \cap \textrm{Int}R_\gamma = \emptyset \nonumber \\ & \textrm{for every } n=0, 1, \cdots, (k-1)q\}.\nonumber\end{align}  Hence, $R_\a \in E_k$ if and only if $\gamma$ is not a substring of $\a
$ and $\a_0=1$.\footnote{That $\a_0=1$ is an unimportant detail.  It merely allows us to consider only points in the nonempty open set $V$.}  

The outline of the proof in~\cite{Ur} and the proof of the author's correction below should now be evident.  Let $K := \cup E_1$.  We should show that $\{E_k\}$ is strongly tree-like so that we can apply Lemma~\ref{lemmMMUr} to obtain a lower estimate for the Hausdorff dimension of $E(q) := \cap_{k=1}^\infty \cup E_k.$  Verifying (\ref{MM2}), (\ref{MM3}), (\ref{MM5}), and (\ref{MM6}) is routine and can be found in~\cite{Ur}.

The minor gap in the proof in~\cite{Ur} arises from incorrectly estimating $\Delta_k$.  It is asserted that (\cite{Ur}, p 390), for $R_{\a i} \in E_k,$ \begin{subequations}\label{badassert} \begin{align} & R_{\a i} \backslash \cup E_{k+1} = \emptyset \textrm{ whenever } i \neq \gamma_0 \textrm{ and } \label{badasserta}\\ & R_{\a i} \backslash \cup E_{k+1} = R_{\a \gamma} \textrm{ whenever } i = \gamma_0.\label{badassertb}\end{align}\end{subequations}  This assertion, however, is false as the following three examples will show.  Because this assertion is false, the estimate of $\Delta_k$ is not always large enough to prove the theorem (see the third example below for an explicit demonstration).  Thus, a correction must be made or else the original proof will not prove the theorem.  Moreover, in addition to demonstrating the minor gap in the original proof, these three examples will illustrate how to formulate the author's correction below.  What is needed, as we shall see in Section~\ref{secCor}, is to consider other elements in $\a i$ in addition to the last element $i$.

The examples can be constructed on a very simple system $(M, \mu, T)$ where $M = S^1:=\RR / \ZZ$, $\mu = \sigma$ is the probability Haar measure, and $$T = m_2:S^1 \rightarrow S^1; x \mapsto 2x.$$

Let $R_1 = [0, 1/2]$ and $R_2 = [1/2, 1]$. It is clear that $\{R_1, R_2\}$ is a Markov partition\footnote{Technically, this Markov partition does not have small diameter.  But, the lack of injectivity on an element of this partition occurs only at the two endpoints, which we can ignore with impunity.  Moreover, the reader can construct similar examples for any dyadic partition with diameter as small as desired.  See the third example below for such a construction.} for $T$ and that the associated transition matrix is $$
\left( \begin{array}{ccc}
1 & 1 \\
1 & 1
\end{array} \right)
.$$  

\begin{proof}[The First Example]
Let $q=2$, $k=2$, $\gamma = 211$, $\a = 1222$, and $i=1$.  Now $R_{\a i} \in E_2$, but $R_{\a i 1  *} \notin E_3$ (where $*$ is any element of $\{1, 2\}$).  Since $R_{\a i} \supset R_{\a i 1 *}$, $R_{\a i} \backslash \cup E_3 \neq \emptyset$, which contradicts (\ref{badasserta}) and finishes the example. \end{proof}

Almost identical examples can be fashioned for any $k$ and $q$ large enough. Similar examples can be fashioned for other Markov partitions and other expanding systems.  It is, now, easy to infer that the gist of the problem is that while $R_\b \in E_N$ implies that $\gamma$ is not a substring of $\b$, one must still consider the case where a truncated $\gamma$ string is the tail of $\b$.  This statement will be made precise in Subsection~\ref{subsecMoreStrings}.

Handling these strings can be complex:

\begin{proof}[The Second Example]
Let $q=4$, $k=2$, $\gamma = 21211$, and $\a = 111112121$.  Hence, $R_\a \in E_2$, but both $R_{\a1***} \notin E_3$ and $R_{\a211*} \notin E_3$ (where each instance of $*$ is some, possibly distinct, element of $\{1, 2\}$). This contradicts (\ref{badasserta}).\end{proof}

Finally, the third example, below, not only contradicts (\ref{badassert}), but also when used in the remainder of the proof in~\cite{Ur} (p 390-391) gives the following true, but not useful fact:  $HD(E(q)) \geq 0$.  First, consider the dyadic partition \[\D=\{R_1, \cdots, R_{2^s}\}\] where $$R_i = [\frac {i-1} {2^s}, \frac i {2^s}].$$  

It is clear that $\D$ is a Markov partition for $T = m_2$ with transition matrix $$ A:=\left( \begin{array}{c}
B \\
B
\end{array} \right)$$ where $$ B := 
\left( \begin{array}{cccccccccccc}
1 & 1 & 0 & 0 & \cdots & & & & & 0 & 0\\
0 & 0 & 1 & 1 & 0 & 0 & \cdots & & & 0 & 0 \\
0 & 0 & 0 & 0 & 1 & 1 & 0 & 0 & \cdots & 0 & 0 \\
&&&&& \vdots &&&&& \\
0 & 0 & \cdots &&&&& 0 & 0 & 1 & 1
\end{array} \right)
.$$ 

Furthermore, by the choice of $s$, we can make the diameter of $\D$ be as small as we like.  For convenience, let $a:=2^s$, $b:=2^s-1$, and $c:=2^{s-1}$.  Note that $A_{c,a}=1$.  Also, for $n \in \NN \cup \{0\}$ and every $\a \in \Sigma(n)$, $R_\a$ is an interval, and $\sigma(R_\a) = 2^{-(s+n)}$.

\begin{proof}[The Third Example]
Choose $s$ large enough so that the diameter of $\D$ is small enough.  Let $q$ be large and $\gamma = a \cdots ab$.  Let $k$ be any natural number.  Without loss of generality, let us exchange $R_c$ with $R_1$.  Let $\a = 1a \cdots a$; then $R_\a \in E_k$.  Let $\b \in \Sigma(q-1)$; then, by (\ref{MP11}), the only $R_{\a\b} \in E_{k+1}$ is for $\b = a \cdots a$.  This contradicts (\ref{badassertb}). 

Let us continue to follow the proof in~\cite{Ur} (p 390-391) using the setup of this example.   We note that \[\sigma(\cup E_{k+1} \cap R_\a) = \sigma(R_{\a a \cdots a}) = \frac{\sigma(R_\a)}{2^q}.\] Thus, \[\textrm{density}(E_{k+1}, R_\a) = 2^{-q}.\]  Let $R_\eta$ be any element of $E_k$.  If $\eta_{kq} = a$, then $R_{\eta \b} \in E_{k+1}.$  Otherwise, if $\eta_{kq} \neq a$, then any valid concatenation (of the correct length) of $\eta$ will produce an element of $E_{k+1}$.  Thus, \[\textrm{density}(E_{k+1}, R_\eta) \geq 2^{-q},\] and $\Delta_j = 2^{-q}$.

Hence, $$\limsup_{k \rightarrow \infty} \frac {\sum_{j=1}^k \log \Delta_j} {\log d_k} \leq \limsup_{k \rightarrow \infty} \frac{-qk \log 2}{\log \delta_T -qk \log 2}= 1.$$  

Applying Lemma~\ref{lemmMMUr} as in~\cite{Ur}, we obtain  $$HD(E(q)) \geq 0,$$ a true, but not useful fact.  It is no help in showing the desired full Hausdorff dimension assertion.   
\end{proof}

Hence, in (\ref{badasserta}), it is possible for \[R_{\a i} \backslash \cup E_{k+1} \supsetneq \emptyset\] when $i \neq \gamma_0$, and, in (\ref{badassertb}), it is possible for \[ R_{\a i} \backslash \cup E_{k+1} \supsetneq R_{\a \gamma}\] when $i = \gamma_0.$  Thus, other elements of $\a i$ besides the last element $i$ must be considered in order to correctly estimate $\Delta_k$.  This is the author's approach and will be discussed in detail in Section~\ref{secCor}.  The other, Urba\'nski's approach, is to replace $T$ with $T^q$ and thus to avoid the need to consider other elements of $\a i$.  Let us outline Urba\'nski's approach now.

After the author alerted Urba\'nski to the minor gap by summarizing the examples and the gist of Section~\ref{secCor}, Urba\'nski provided the following alternative correction in personal communication.  Instead of considering the set $E_k(q)$ above, consider the set  \[\tilde{E}_k(q) := \{R_\a \mid \a \in \Sigma(kq), \a_0=1, \textrm{ and } T^{qn}(R_\a) \cap \textrm{Int}R_\gamma = \emptyset \]\[\textrm{ for every } n = 0, 1, \cdots k-1\}\] where $q$ is a large enough natural number and $\gamma \in \Sigma(q)$ with $\gamma_0 \neq 1.$  Clearly, $\tilde{E}(q):=\cap_{k=1}^\infty \cup \tilde{E}_k(q)$ is the set of points whose forward orbits under $T^q$ (instead of $T$) avoid Int$R_\gamma$.  Verifying (\ref{MM1}), (\ref{MM2}), (\ref{MM3}), (\ref{MM5}), and (\ref{MM6}) is still routine, and, what is more, it is clear (since we iterate by $T^q$) that now (\ref{badassert}) also holds. To show that $\Delta_k \geq 1/2$ (for $\tilde{E}_k(q))$, one can now follow the proof exactly as in~\cite{Ur}, namely one uses the bounded distortion principle as expressed in the form of Lemma~\ref{lemmBDII} below.  Applying Lemma~\ref{lemmMMUr} yields \[HD(\tilde{E}(q)) \geq \dim M + \frac {- \log 2}{q\log\lambda}.\]  

Now let $x$ be a point in $\tilde{E}(q)$.  Since $\{T^{qn}(x)\}_{n=1}^\infty$ is nondense, it is nowhere dense by ergodicity.  The same remark applies to its iterates $\{T^{qn-j}(x)\}_{n=1}^\infty$ for every $j = 1, \cdots, q$.  Thus, if the orbit of $x$ under $T$ itself were dense, we would obtain a contradiction of the Baire category theorem.  Hence, $\tilde{E}(q)$ is also a set of points whose forward orbits under $T$ are nondense.  Letting $q \rightarrow \infty$ shows that the set of points whose forward orbits under $T$ are nondense has full Hausdorff dimension. 

Clearly, Urba\'nski's correction is a concise, elegant perturbation of his original proof.  This conciseness, however, loses information.  In particular, we no longer, after applying the Baire category theorem, keep track of the point that these forward orbit closures miss, nor do we have much information on the symbolic description (as infinite strings) of these points whose forward orbits are nondense.  Both pieces of information are useful, but the second piece, the symbolic description of the points with nondense orbits, is crucial to the generalization of this result in dimension one to Schmidt games.  

On the other hand, the author's correction, while it is longer and more involved, is able to keep track of the point being missed and provides some precise information on the symbolic description of the points with nondense orbits, enough information to play Schmidt games.  In the next section, we turn to the author's correction, which is based on the insight gleaned from the examples above.  These examples  illustrate all the different types of difficulties that we will encounter in making the correction. 

\section{The New Proof} \label{secCor}

In this section, we provide the author's correction for the proof of Theorem~\ref{thmUrbexpanding}.  Before giving the proof of the correction, we must make a more in-depth study of strings and use the insight gleaned from this study to prove the ``No Matching'' lemma (Lemma~\ref{lemmPartMatch} below) used in the correction.  The last step before proving the correction is to study Markov partitions further and, in particular, make our first refinement of the bounded distortion property.

\subsection{More on Strings}\label{subsecMoreStrings} Recall the basic facts about strings from Subsection~\ref{subsecMPBasics}.  There are some more basic facts that we need.  Given a string with an element $i$ that has no successor, a \textbf{concatenation or appending (on the right)} is a new string identical to the given string except that a successor is chosen from the given alphabet for this element $i$.  (Note that a repeated concatenation may be referred to simply as a concatenation depending on context.)  Given any string $\a$, define the \textbf{length of $\a$, $l(\a)$}, to be the number of elements in $\a$.  A string is \textbf{finite} if it is a finite sequence.  A finite string is \textbf{reducible} if it is of the form $a \cdots a$ where $a = \a_0 \cdots \a_r$ is a string of length $r+1$.  A finite string is \textbf{irreducible} if it is not reducible.   

\subsubsection{Partial String Matches}  Let $n \leq N$.  A $n$-string $\b$ is \textbf{equivalent} to a $N$-string $\a$ (or a $N$-string $\a$ is \textbf{equivalent} to a $n$-string $\b$) if $\a_0=\b_0, \cdots, \a_n = \b_n$.  Let $\gamma$ be a $n$-string and $\a$, a $N$-string.  A \textbf{match of $\gamma$ with $\a$} is an $(i, i+n)$-substring of $\a$ given by $\a_i = \gamma_0, \a_{i+1} = \gamma_1, \cdots, \a_{i+n} = \gamma_n$.  Whenever $\gamma$ is a substring of $\a$, there is at least one such match.  A  \textbf{partial match of $\gamma$ with $\a$} is an $(i,N)$-substring of $\a$ given by $\a_i = \gamma_0, \a_{i+1} = \gamma_1, \cdots, \a_N = \gamma_m$ where $m < n$.  Consequently, $i > N-n$.  Call $i$ the \textbf{head (of the partial match)}.

Note that if two partial matches of $\gamma$ with $\a$ have heads $i <j $, then a ``right shift and crop'' of the one with the smaller head will produce the one with the larger head.  This is just pattern matching.

\subsubsection{Valid Strings and Matching.}  Let us now specialize to valid strings (defined in Subsection~\ref{subsecMPBasics}) for a Markov partition with small diameter $\Rc:=\{R_1, \cdots, R_s\}$.  

By (\ref{MP5}) and (\ref{MP11}), there exists a letter for which concatenation on the right of any valid finite string produces a valid finite string.  But, there exist Markov partitions such that for some letter $i$, only one letter $j$ produces a valid $1$-string when concatenated on the right; such $i$ is called a \textbf{degenerate letter}.  A letter that is not degenerate is \textbf{nondegenerate}.  A \textbf{block} of a string is a substring composed of exactly one nondegenerate letter, which is found at the largest index.  Note that given the initial letter in a block, the only valid concatenation on the right of the initial letter is the one that produces the rest of the block.  By (\ref{MP13}), there exists an integer $B$, called the \textbf{maximal block length}, such that for every $B$-string $\a$, $\sigma(R_{\a_0}) > \sigma(R_\a)$.  A \textbf{general block} of a string is a substring composed of exactly one nondegenerate letter. A \textbf{reverse block} of a string is a substring composed of exactly one nondegenerate letter, which is found at the smallest index.  A \textbf{double general block} of a string is a substring composed of a block followed by a reverse block.  (Hence, a double general block has exactly two nondegenerate letters; they are adjacent.)

\begin{lemm}\label{lemmDistinctDisjoint} In a string composed of only degenerate letters, each letter is distinct.
\end{lemm}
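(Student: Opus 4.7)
The plan is a proof by contradiction that promotes any repetition in the string to a cycle under $T$, then rules out such a cycle by volume expansion. Suppose the string $\a$ consists only of degenerate letters and that $\a_i = \a_j$ for some $i < j$. Because each $\a_h$ with $h < j$ is degenerate and $\a$ is valid, the letter $\a_{h+1}$ in $\a$ is forced to be the unique $k$ with $A_{\a_h,k} = 1$; writing this unique successor as $\tau(\a_h)$, induction gives $\a_h = \tau^{h-i}(\a_i)$ for every $i \le h \le j$. Setting $n := j-i \ge 1$, the equality $\a_j = \a_i$ becomes $\tau^n(\a_i) = \a_i$, so the letters $\a_i, \a_{i+1}, \ldots, \a_{j-1}$ form a closed cycle of length $n$ under $\tau$.

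Next, I would translate this combinatorial cycle into a dynamical cycle among Markov elements. For any degenerate letter $\ell$, properties (\ref{MP2}), (\ref{MP3}), and (\ref{MP5}) force $T(R_\ell) = R_{\tau(\ell)}$: by (\ref{MP5}), $T(R_\ell) = \bigcup_{k \in S} R_k$ for some $S$, and a routine topological argument using $R_k = \overline{\mathrm{Int}\,R_k}$ together with (\ref{MP3}) identifies $S$ with $\{k : A_{\ell,k}=1\}$, which by degeneracy is the singleton $\{\tau(\ell)\}$. Applying this along the cycle yields
\[
T^{n}(R_{\a_i}) \;=\; R_{\tau^{n}(\a_i)} \;=\; R_{\a_i}.
\]

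Finally, I would extract a volume contradiction. Each $R_{\a_h}$ has diameter less than $\delta_T$, so $T$ is injective on it, and the expanding hypothesis $\|D_x T(v)\| \ge \lambda\|v\|$ gives the pointwise bound $J(T)(x) \ge \lambda^{\dim M}$. The Jacobian change of variables then gives
\[
\sigma(R_{\a_{h+1}}) \;=\; \sigma\bigl(T(R_{\a_h})\bigr) \;=\; \int_{R_{\a_h}} J(T)\,d\sigma \;\ge\; \lambda^{\dim M}\,\sigma(R_{\a_h}),
\]
and iterating $n$ times around the cycle produces $\sigma(R_{\a_i}) \ge \lambda^{n\dim M}\,\sigma(R_{\a_i})$. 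Since $R_{\a_i} = \overline{\mathrm{Int}\,R_{\a_i}}$ has nonempty interior, $\sigma(R_{\a_i}) > 0$, and $\lambda > 1$ forces $1 \ge \lambda^{n\dim M} > 1$, the desired contradiction. The only real obstacle is the topological identification $T(R_\ell) = R_{\tau(\ell)}$ for a degenerate letter $\ell$; once that is done via (\ref{MP2}), (\ref{MP3}), and (\ref{MP5}), everything else follows directly from the expanding Jacobian bound and the small-diameter injectivity.
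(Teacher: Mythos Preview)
Your proof is correct. The paper takes a slightly different route: rather than pushing volume forward under $T$ around the cycle, it observes that whenever every letter is degenerate one has $R_{\gamma_0} = R_{\gamma_0\gamma_1} = \cdots = R_\gamma$ by \eqref{MP11}, then extends the repeating string past the maximal block length $B$ (whose existence was established from \eqref{MP13} just before the lemma) to obtain $\sigma(R_\gamma) = \sigma(R_{\gamma_0})$, contradicting the defining property of $B$. Your argument bypasses the maximal block length entirely and instead establishes $T(R_\ell) = R_{\tau(\ell)}$ for degenerate $\ell$, then uses the pointwise Jacobian bound $J(T) \ge \lambda^{\dim M}$ directly to force $\sigma(R_{\a_i}) \ge \lambda^{n\dim M}\sigma(R_{\a_i})$. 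Both routes ultimately rest on expansion; yours is more self-contained, while the paper's is shorter because it leverages the already-introduced constant $B$.
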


\begin{proof} Assume not.  Let $\a$ be a string of degenerate letters, and let $\a_i = \a_j $ for $i < j$.  The only valid concatenations of $\a$ are those for which the following substring repeats:  $\a_i \a_{i+1} \cdots \a_{j-1}.$  Let $\gamma$ be a concatenation of $\a$ longer than the maximal block length.  But $\sigma(R_\gamma) = \sigma(R_{\a_0})$, a contradiction. \end{proof}

\begin{coro}  \label{corMaxBlock}The maximal block length of the Markov partition $\{R_1, \cdots, R_s\}$ is at most $s$.  The maximal length of any general block is at most $2s -1$; any double general block, at most $2s$.
\end{coro}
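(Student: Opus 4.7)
The plan is to derive all three bounds from Lemma~\ref{lemmDistinctDisjoint} combined with pigeonhole on the $s$-letter alphabet. Underlying everything is the following volume dichotomy extracted from (\ref{MP11}): for a valid string, $R_{\a_0 \cdots \a_i} = R_{\a_0 \cdots \a_i \a_{i+1}}$ when $\a_i$ is degenerate (its successor is forced), whereas $\sigma(R_{\a_0 \cdots \a_i}) > \sigma(R_{\a_0 \cdots \a_i \a_{i+1}})$ when $\a_i$ is nondegenerate, since by (\ref{MP11}) the set $R_{\a_0 \cdots \a_i}$ then splits into at least two pieces of positive measure with disjoint interiors by (\ref{MP7}) and (\ref{MP8}). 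Thus volume strictly drops precisely at transitions out of nondegenerate letters.

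For the first assertion, I would show that every $s$-string $\a = \a_0 \cdots \a_s$ satisfies $\sigma(R_{\a_0}) > \sigma(R_\a)$, which by the dichotomy reduces to exhibiting a nondegenerate letter among $\a_0, \ldots, \a_{s-1}$. If none existed, these $s$ letters would be degenerate and hence pairwise distinct by Lemma~\ref{lemmDistinctDisjoint}, exhausting the alphabet. Then every letter of the alphabet would be degenerate, so $\a_s$ would also be degenerate, making $\a_0 \cdots \a_s$ a string of $s+1$ pairwise distinct degenerate letters, which is impossible by pigeonhole.

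For the general block bound, write such a block as $\b_0 \cdots \b_k$ with its unique nondegenerate letter at position $j$. Both $\b_0 \cdots \b_{j-1}$ and $\b_{j+1} \cdots \b_k$ consist entirely of degenerate letters, so Lemma~\ref{lemmDistinctDisjoint} forces their entries to be pairwise distinct. Since $\b_j$ is nondegenerate, at most $s-1$ letters of the alphabet are degenerate, giving $j \leq s-1$ and $k-j \leq s-1$, hence length $k+1 \leq 2s-1$. For the double general block, the same counting applied separately to the block component (with up to $s-1$ distinct degenerate letters preceding its terminal nondegenerate letter) and to the reverse block component (with up to $s-1$ distinct degenerate letters following its initial nondegenerate letter) shows that each has length at most $s$, so the concatenation has length at most $2s$.

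The only nonroutine ingredient is the volume dichotomy used in the first assertion; once that is spelled out, the three bounds reduce to clean applications of Lemma~\ref{lemmDistinctDisjoint} and pigeonhole, and I do not anticipate any substantial obstacle.
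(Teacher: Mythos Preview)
Your proposal is correct and follows essentially the same approach as the paper. The paper's proof treats only the first assertion explicitly and derives the contradiction from (\ref{MP13}) (if every letter were degenerate, diameters would not shrink), whereas you close via pigeonhole on the $(s+1)$-letter degenerate string; both routes work, and your explicit treatment of the volume dichotomy and of the general and double general block bounds simply fills in what the paper leaves to the reader.
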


\begin{proof} The lemma implies that the maximal block length is at most $s+1$.  If it equals $s+1$, then all letters are degenerate, a contradiction of (\ref{MP13}). \end{proof}

We are now ready to prove the key lemma in the proof of the correction and of the generalization:

\begin{lemma}[No Matching]\label{lemmPartMatch}Let $N \geq n \geq 8s-4$.  Let $\gamma$ be any $n$-string such that $\gamma_{n-1}$ is nondegenerate \textbf{except} those of the following kind:  \[\gamma = a^0 \cdots a^m \] where \[a^0 = \cdots = a^{m-1}\] are general blocks and either \begin{equation}\label{eqn1}a^m \textrm{ is a general block not equivalent to }
a^0a^0\end{equation} or \begin{equation}\label{eqn2} a^m \textrm{ is a double general block not equivalent to }
a^0a^0.\end{equation}  And let $\a$ be a $N$-string such that no match of $\gamma$ with $\a$ exists.  Then there exists a choice of substrings $b^0$ and $b^1$ of length at most $s$ such that for any letters $\b_0, \b_1, \cdots \b_k$, no match of $\gamma$ with the $N+n$-string $\a b^0 b^1 \b_0 \cdots \b_k$ exists.
\end{lemma}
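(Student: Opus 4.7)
The plan is a two-step ``extend-and-diverge'' construction. Informally, I would take $b^0$ and $b^1$ each to extend $\a$ along the forced (degenerate) continuation of $\gamma$ and then diverge at the first available nondegenerate letter, with $b^0$ handling the longest partial match of $\gamma$ with $\a$ and $b^1$ handling whatever partial matches are introduced by $b^0$. The tools are Corollary~\ref{corMaxBlock} (maximal block length $\leq s$) and the right-shift-and-crop observation about partial matches noted just before the lemma.

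First I would dispose of the case in which $\a$ has no partial match of $\gamma$ at its tail: the conclusion is then immediate because every potential match of $\gamma$ with $\a b^0 b^1 \b_0 \cdots \b_k$ has head $i \leq N$ (the extended string has length $N + n + 1$), and such a match would force $\a_i \cdots \a_N = \gamma_0 \cdots \gamma_{N - i}$, a partial match ruled out by hypothesis; any valid $b^0, b^1$ of length at most $s$ then work. Otherwise let $i^*$ denote the head of the longest partial match, so that $\a_{i^*} \cdots \a_N = \gamma_0 \cdots \gamma_{N - i^*}$.

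Next, for $b^0$, I would let $m$ be the smallest index with $m \geq N - i^*$ and $\gamma_m$ nondegenerate. Since the block containing $\gamma_{N - i^*}$ has length at most $s$ (Corollary~\ref{corMaxBlock}) and the structural hypothesis on $\gamma$ supplies a nondegenerate letter in range, such an $m$ exists with $m - (N - i^*) \leq s - 1$. I would then set
\[b^0 := \gamma_{N - i^* + 1}\, \gamma_{N - i^* + 2}\, \cdots\, \gamma_m\, c,\]
where $c$ is a valid successor of $\gamma_m$ different from $\gamma_{m+1}$ (such $c$ exists because $\gamma_m$ is nondegenerate); then $l(b^0) \leq s$, and the partial match at $i^*$ is broken by $c$ at position $N + l(b^0)$. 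I would define $b^1$ by the same procedure applied to $\a b^0$ in place of $\a$: take $i^{**}$ to be the head of the longest partial match of $\gamma$ with $\a b^0$ (which may lie inside $b^0$ itself, in which case the partial match has length at most $l(b^0) \leq s$), and use $i^{**}$ to produce $b^1$ of length at most $s$.

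The verification would then reduce to checking that any match of $\gamma$ with $\a b^0 b^1 \b_0 \cdots \b_k$ (whose head $i$ must satisfy $i \leq N$) is ruled out; since the $\b_j$'s are arbitrary valid letters, the required mismatch must occur within $b^0 b^1$. The hard part will be the case analysis needed to rule out that some partial match survives both divergences: the right-shift-and-crop relation would then force $\gamma$ to exhibit a self-similarity incompatible with either the nondegeneracy of $\gamma_{n-1}$ or the block-repetition hypothesis (which excludes $a^m$ equivalent to $a^0 a^0$). The bound $n \geq 8s - 4$ is calibrated to give enough room for this case analysis, which must account for general blocks (length at most $2s - 1$) and double general blocks (length at most $2s$) in $\gamma$'s structure.
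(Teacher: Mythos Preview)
Your extend-and-diverge idea is the right one, and your treatment of the case with no partial matches is fine. But your uniform rule ``break the longest partial match with $b^0$, then the longest surviving one with $b^1$'' fails in exactly the situation the paper isolates as Case~3A, where the partial matches are periodic with period a single general block.

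Concretely, take $s=2$ with the full-shift transition matrix (so every letter is nondegenerate), $n=12$, $\gamma = 1^{10}212$, and $\a = 2^3 1^{10}$ (so $N=12$). One checks that $\gamma$ is not of the excluded form. The partial matches of $\gamma$ with $\a$ sit at heads $i^*=3,4,\dots,12$, all coming from the periodic tail $1^{10}$. Your $b^0$ has $m=9$ (since $\gamma_9$ is already nondegenerate) and $c\neq \gamma_{10}=2$, so $b^0=1$; this kills the match at $i^*=3$ but \emph{every} shorter partial match survives, because they all want the periodic letter $1$. Your $b^1$ then repeats the same step, giving $b^1=1$. Now $\a b^0 b^1 = 2^3 1^{12}$, and with $\beta_0\beta_1\beta_2 = 212$ there is a full match of $\gamma$ at head $5$. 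So two divergences from the longest match are not enough; no amount of case analysis in the verification will rescue this construction.

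The paper's fix is to reverse the order in this case: when the period $c$ is a single general block, $b^0$ is chosen to diverge from the \emph{periodic} continuation (here $b^0_0\neq 1$, so $b^0=2$), which simultaneously kills all the short partial matches; the only possible survivor is the longest one at $i^*$, and $b^1$ is then used to diverge from $\gamma$ itself to kill that. When $c$ contains at least two nondegenerate letters (Case~3B), your order---longest first, then the rest---does work, but one must still use the periodicity to argue that a single well-chosen $b^1$ handles all heads $\geq j$ at once, not just the next longest. So the missing ingredient is recognizing that the two sub-cases need the two divergences in opposite orders.
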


\begin{rema}  It is possible for both (\ref{eqn1}) and (\ref{eqn2}) to hold for the same string $\gamma$.
\end{rema}

\begin{rema}
If no match of $\gamma$ with $\a b^0 b^1 \b_0 \cdots \b_k$ exists, then no match of $\gamma$ with $\a b^0 b^1 \b_0 \cdots \b_{k'}$ for any $0 \leq k' \leq k$ exists.
\end{rema}

\begin{proof}  By Corollary~\ref{corMaxBlock}, all $n$-strings contain at least four general blocks.  Note that for the exceptional $n$-strings, we obtain $m \geq 3$ by Corollary~\ref{corMaxBlock}. 

There are three cases: \bigskip

\noindent\textbf{Case 1:  No partial matches of $\gamma$ with $\a$ exist.} \medskip

Choose any letters for $b^0 b^1 \b_0 \cdots \b_k$ that make $\a b^0 b^1 \b_0 \cdots \b_k$ a valid string.  \bigskip

\noindent\textbf{Case 2:  There exists exactly one partial match of $\gamma$ with $\a$.} \medskip

Hence, there exists exactly one choice of letter for the initial letter of $b^0$ (namely a choice for $b^0_0$) which would produce a match of $\gamma$ with $\a b^0 b^1 \b_0 \cdots \b_k$.  

If $\a_N$ is nondegenerate, let $b^0_0$ not be this letter.  Since no other partial matches of $\gamma$ with $\a$ exists, we are free to take any letters for the remainder as long as they form a valid string.  

If $\a_N$ is degenerate, then $\a_Nb^0$ must contain the block starting with $\a_N$.  Also, because $\gamma_{n-1}$ is nondegenerate, $\gamma$ must contain this block (not just partially contain it).  By the definition of block, we have a choice of letter to concatenate on the right of the block.  Choose the letter that is different from what is in $\gamma$.  Hence, we have made a choice for $b^0$, and again we are free to take any letters for the remainder as long as they form a valid string.  

\bigskip

\noindent\textbf{Case 3:  There are at least two partial matches of $\gamma$ with $\a$.}  \medskip

Let $i$ be the smallest head and $j$ be the second smallest head.  Let $\gamma^i$ correspond to the partial match with $i$ as head; $\gamma^j$, with $j$ as head.

Now $\gamma^i$ is the concatenation of the same substring of length $j-i\geq 1$. Denote the substring by $c = \gamma_0 \cdots \gamma_{j-i-1}$.  Then,  $$\gamma^i = c \cdots c \gamma_0 \cdots \gamma_r$$ where $0 \leq r \leq j-i-1$.  (Note that $\gamma^i$ may contain only one substring $c$.)  For an nonnegative integer $t$, let $0 \leq \bar{t} < j-i$ denote the representative of $t$ mod $j-i$.  By Lemma~\ref{lemmDistinctDisjoint}, $c$ contains at least one nondegenerate letter.  (Moreover, $i$ and $j$ imply that $c$ is irreducible.)  There are two cases:  \bigskip

\noindent\textbf{Case 3A:  The substring $c$ is a general block.} \medskip

If $\gamma_r$ is the one nondegenerate letter in $c$, then choose $b^0$ to be a $0$-string and $b^0_0 \neq \gamma_{\overline{r+1}}$.  Now, if $b^0_0$ is nondegenerate, we have that $\gamma^i b^0 = c \cdots c \tilde{c}$ where $\tilde{c}$ is a double general block.  Otherwise, if $b^0_0$ is degenerate, we have that $\gamma^i b^0 = c \cdots c \tilde{c}$ where $\tilde{c}$ is a general block.    

Otherwise, $\gamma_r$ is degenerate, and thus the choices of $b^0$ are fixed until after we reach the next nondegenerate letter, $b^0_q$.  Because $\gamma_{n-1}$ is nondegenerate, $\gamma^i b^0_0 \cdots b^0_q$ is a substring of $\gamma$.  Moreover, $\gamma_r b^0_0 \cdots b^0_q$ must appear together in $\gamma^i$ because of the repeating substring.  Hence, it is a substring of $cc$.  If $q+1 = l(c)$, then $\gamma_r = b^0_q$, a contradiction.  Hence, $q+1 < l(c)$, and we can choose $b^0_{q+1}$ to be different from the letter that follows the substring $\gamma_r b^0_0 \cdots b^0_q$ in $cc$.  Now, if $b^0_{q+1}$ is nondegenerate, we have that $\gamma^i b^0 = c \cdots c \tilde{c}$ where $\tilde{c}$ is a double general block.  Otherwise, if $b^0_{q+1}$ is degenerate, we have that $\gamma^i b^0 = c \cdots c \tilde{c}$ where $\tilde{c}$ is a general block.

Hence, only $\gamma^i$ may possibly be completed to a match of $\gamma$ with $\a b^0 b^1 \b_0 \cdots \b_k$.  If $\gamma^i b^0$ is not equivalent to $\gamma$, we are done.  

Thus, let $\gamma^i b^0$ be equivalent to $\gamma$.  There are two cases.  If $\gamma^i b^0 = c \tilde{c}$, then there is another nondegenerate letter after $\gamma^i b^0$ in $\gamma$ because $\gamma$ contains at least four general blocks.
Otherwise, $\gamma^i b^0 = c \cdots c \tilde{c}$ where $\tilde{c}$ is either a general or double general block not equivalent to $cc$ (as constructed above).  Thus, there is at least another nondegenerate letter after the substring $\tilde{c}$ in $\gamma$ because we exclude strings of the form (\ref{eqn1}) and (\ref{eqn2}).  

Let $\tilde{c}_t$ be the last letter of $\tilde{c}$ (i.e. $\tilde{c}_t = b^0_{q+1}$).  If $\tilde{c}_t$ is nondegenerate, choose $b^1$ to be a $0$-string where $b^1_0$ is a different letter than what follows $\gamma^i b^0$ in $\gamma$.  If $\tilde{c}_t$ is degenerate, then the substring $\tilde{c}_t b^1_0 \cdots b^1_p$ up to the next nondegenerate letter (i.e. $b^1_p$) in $\gamma$ is  determined.  (Since $\gamma_{n-1}$ is nondegenerate, $b^1_p$ comes before $\gamma_n$.)  Since this is a block, we have a choice of letters for $b^1_{p+1}$; pick it so that it is different from that in $\gamma$.  Hence, $\gamma^i$ cannot produce a match either.  We may now pick any letters for the remainder as long as they produce a valid string.  

\bigskip

\noindent\textbf{Case 3B:  The substring $c$ is not a general block.} \medskip

Hence, $c$ contains at least two nondegenerate letters (not necessarily distinct).  Also, $j-i \geq 2$.

If $\gamma_r$ is a nondegenerate letter in $c$, then choose $b^0$ to be a $0$-string and $b^0_0 \neq \gamma_{l(\gamma^i)}$.  Otherwise, the choices of $b^0$ are fixed until after we reach the next nondegenerate letter, $b^0_q$.  Because $\gamma_{n-1}$ is nondegenerate, $\gamma^i b^0_0 \cdots b^0_q$ is a substring of $\gamma$.  Pick $b^0_{q+1}$ to be different from the letter that follows $\gamma^i b^0_0 \cdots b^0_q$ in $\gamma$.   Hence, $\gamma^i b^0$ cannot complete to a match.  

Because of the repeating substrings, we know the $j-i$ adjacent letters, namely a substring of $cc$, that are needed for $\gamma^j$ or any partial match with larger head to produce a match of $\gamma$ with $\a b^0 b^1 \b_0 \cdots \b_k$.  (Note that every letter of $c$ appears in such a substring.)

Let $k$ be any head greater than or equal to $j$.  Assume $\gamma^k b^0$ can be completed to $\gamma$.

Now if $\gamma_r$ is nondegenerate, then $b^0 = \gamma_{\overline{r+1}}$, a $0$-string.  And one of the letters $\gamma_{\overline{r+1}}, \cdots, \gamma_{\overline{r+j-i-1}}$ is also nondegenerate.  If $b^0$ is nondegenerate, choose $b^1_0$ to be different from $\gamma_{\overline{r+2}}$.  Otherwise, $b^0$ is degenerate, and thus the block beginning with $b^0$ must be at most $l(c)-1$ in length.  There are at least two ways to concatenate a letter to the end of this block.  Pick, for $b^1$, one that is different from the one in $\gamma$.

Otherwise, $\gamma_r$ is degenerate.  Assume that $b^0_{q+1}$ is nondegenerate.  Hence, the $q+1$-string $b^0$ has length strictly less than $l(c)$ since otherwise $b^0_{q+1} = \gamma_r$, a contradiction.  Thus, for $\gamma^k b^0$ to complete to $\gamma$, we know, by the length, exactly the letter that is required to be concatenated on the right.  Let $b^1$ be a $0$-string such that $b^1_0$ is not this letter.  Otherwise, $b^0_{q+1}$ is degenerate, and $b^0$ has exactly one nondegenerate letter, namely $b^0_q$.  Choose the beginning of $b^1$ to be the rest of the block starting with $b^0_{q+1}$.  Let $b^1_h$ be the nondegenerate letter at the end of this block.  If $q+2 + h+ 1 = l(c)$, then $b^0b^1_0 \cdots b^1_h$ are all the letters in $c$, and therefore $b^1_h = \gamma_r$, a contradiction.  Thus, for $\gamma^k b^0b^1_0 \cdots b^1_h$ to complete to $\gamma$, we know, by the length, exactly the letter that is required to be concatenated on the right.  Let $b^1_{h+1}$ not be this letter.  We may now pick any letters for the remainder as long as they produce a valid string.  

The proof of the lemma is complete.  \end{proof}

\subsection{More Markov Partitions.}\label{subsecMoreMP}  In this subsection, we continue from Subsection~\ref{subsecMPBasics} our study of Markov partitions.  In particular, we make our first refinement of the bounded distortion property.  

As in Subsection~\ref{subsecMPBasics}, let us consider a Markov partition with small diameter $\Rc := \{R_1, \cdots, R_s\}$ for $T$.  Define \[G(n):=\{R_\a \mid \a \in \Sigma(n)\};\] call $G(n)$ the \textbf{$n^{th}$ generation of $\Rc$}.  Hence, $G(0) = \Rc$.  If $\gamma$ is a valid string, let $G_\gamma$ denote the \textbf{generation that $R_\gamma$ belongs to}.  Also, denote the \textbf{set of boundary points of all elements of all generations of $\Rc$} by $\partial(\Rc)$ (or $\partial$, if the context implies the Markov partition).  Finally, \textbf{interior points} are the points in the full volume set $M \backslash \partial$.

Define a \textbf{lower constant of bounded distortion}

$$\varepsilon(q) := \min_{\delta \in \Sigma(q)} \frac{\sigma(R_\delta)} {\sigma(R_{\delta_0})} > 0$$

and an \textbf{upper constant of bounded distortion}

$$1 \geq \E(q) := \max_{\delta \in \Sigma(q)} \frac{\sigma(R_\delta)} {\sigma(R_{\delta_0})} > 0.$$

It is clear that both $\E(q)$ and $\varepsilon(q)$ are weakly monotonically decreasing functions of $q$, both of which tend to $0$ as $q$ tends to $\infty$.

Following, essentially,~\cite{Ur}, we put the notion of bounded distortion, Theorem~\ref{thmBDI}, into a very useful form (note that $C$ is from the theorem):

\begin{lemm}\label{lemmBDII} For every element $R_\a \in G(N)$ and every element $R_{\a \b} \in G(N+n)$, \[\frac {\varepsilon(n)}{C} \leq \frac {\sigma(R_{\a\b})} {\sigma(R_\a)} \leq C \E(n).\]
\end{lemm}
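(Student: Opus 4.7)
The plan is to push $R_\a$ and $R_{\a\b}$ forward by $T^N$ and reduce the claim to the bounded distortion estimate of Theorem~\ref{thmBDI}. Since $R_\a \in G(N)$, the string $\a$ has length $N+1$, and iterating property~(\ref{MP9}) gives $T^N(R_\a) = R_{\a_N}$, a single element of $\Rc$, and $T^N(R_{\a\b}) = R_\delta$ for some valid $n$-string $\delta$ with $\delta_0 = \a_N$. Iterated use of property~(\ref{MP13}) shows that every set $T^j(R_\a)$ with $0 \leq j < N$ has diameter less than $\delta_T$, so each application of $T$ is injective and hence $T^N$ is injective on $R_\a$ (and on its subset $R_{\a\b}$). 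The change-of-variables formula from Subsection~\ref{subsecMPBasics} then gives
\[ \sigma(R_{\delta_0}) \;=\; \int_{R_\a} J(T^N)\, d\sigma \qquad \textrm{and} \qquad \sigma(R_\delta) \;=\; \int_{R_{\a\b}} J(T^N)\, d\sigma. \]

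Next I bound each integral above and below by the volume of the domain of integration times the pointwise maximum and minimum of $J(T^N)$ on the larger set $R_\a$, using that $R_{\a\b} \subset R_\a$. By Theorem~\ref{thmBDI}, this pointwise maximum on $R_\a$ is at most $C$ times the pointwise minimum on $R_\a$. Dividing the two resulting two-sided estimates yields
\[ \frac{1}{C}\,\frac{\sigma(R_\delta)}{\sigma(R_{\delta_0})} \;\leq\; \frac{\sigma(R_{\a\b})}{\sigma(R_\a)} \;\leq\; C\,\frac{\sigma(R_\delta)}{\sigma(R_{\delta_0})}. \]
Since $\delta \in \Sigma(n)$, the definitions of the distortion constants give $\varepsilon(n) \leq \sigma(R_\delta)/\sigma(R_{\delta_0}) \leq \E(n)$, and the conclusion of the lemma follows.

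There is essentially no difficult step: the argument is a mechanical translation of Theorem~\ref{thmBDI} into a multiplicative volume estimate, and the factor $C^{\pm 1}$ on either side is exactly the price of replacing a ratio of Jacobians by the ratio of reference volumes $\sigma(R_\delta)/\sigma(R_{\delta_0})$. The only mild technicality is the bookkeeping around the indexing convention (an $N$-string has $N+1$ letters, so the correct iterate is $T^N$, not $T^{N+1}$) and the verification of injectivity of $T^N$ on $R_\a$, both of which follow immediately from (\ref{MP13}) and the definition of $\delta_T$.
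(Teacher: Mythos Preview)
Your proof is correct and follows essentially the same route as the paper: push $R_\a$ and $R_{\a\b}$ forward by $T^N$ via the change-of-variables formula, bound the resulting integrals by $\sup$ and $\inf$ of $J(T^N)$ over $R_\a$, and invoke Theorem~\ref{thmBDI} to control the $\sup/\inf$ ratio by $C$. Your write-up is in fact slightly more careful than the paper's in explicitly verifying injectivity of $T^N$ on $R_\a$ and in handling the indexing convention; the mathematical content is the same.
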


\begin{proof}  The proof for $N=0$ is obvious.  Let $N \geq 1$.  The proof of the first inequality is the following:   \[\varepsilon(n) \leq \frac{\sigma(R_{\a_N\b})} {\sigma(R_{\a_N})} = \frac{\int_{R_{\a\b}} J(T^{N})(x) d\sigma(x)}{\int_{R_{\a}} J(T^{N})(x) d\sigma(x)} \] \[\leq \frac{\sigma(R_{\a\b}) \sup\{J(T^{N})(x) \mid x \in R_\a\}}{\sigma(R_{\a}) \inf\{J(T^{N})(x) \mid x \in R_\a\}} \leq C \frac{\sigma(R_{\a\b})}{\sigma(R_{\a})}.\]

The second inequality is similar:  \[\E(n) \geq \frac{\sigma(R_{\a_N\b})} {\sigma(R_{\a_N})} = \frac{\int_{R_{\a\b}} J(T^{N})(x) d\sigma(x)}{\int_{R_{\a}} J(T^{N})(x) d\sigma(x)} \] \[\geq \frac{\sigma(R_{\a\b}) \inf\{J(T^{N})(x) \mid x \in R_\a\}}{\sigma(R_{\a}) \sup\{J(T^{N})(x) \mid x \in R_\a\}} \geq \frac 1 C \frac{\sigma(R_{\a\b})}{\sigma(R_{\a})}.\qedhere\]\end{proof}

\noindent We will again refine the notion of bounded distortion in Subsection~\ref{subsecEvenMoreMP}.  

Next, we show that a point has more than one representation (defined in Subsection~\ref{subsecMPBasics}) if and only if it lies on the boundary of $R_\a$ for some valid finite string $\a$:

\begin{lemm}\label{lemmReps}  A point $x \in M$ has non-unique representations $\iff x \in \partial$.  The set of points with non-unique representations is $\sigma$-null. 
\end{lemm}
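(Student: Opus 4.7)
The plan is to split the statement into three sub-claims and address them in order: ($\Leftarrow$) a point on some boundary has at least two representations; ($\Rightarrow$) a point with two representations lies on some boundary; and finally $\sigma(\partial) = 0$. The third will follow almost immediately from (\ref{MP12}), so the real content is the biconditional.

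For ($\Rightarrow$), I would suppose $x$ admits two distinct representations $\a, \b \in \Sigma(\infty)$ and let $n$ be the least index with $\a_n \neq \b_n$. Then $x \in R_{\a_0 \cdots \a_n} \cap R_{\b_0 \cdots \b_n}$, two distinct elements of $G(n)$, whose interiors are disjoint by (\ref{MP8}); hence $x$ lies in at least one of $R_{\a_0 \cdots \a_n} \setminus \mathrm{Int}\,R_{\a_0 \cdots \a_n}$ or $R_{\b_0 \cdots \b_n} \setminus \mathrm{Int}\,R_{\b_0 \cdots \b_n}$, so $x \in \partial$.

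For ($\Leftarrow$), I would take $x \in \partial$, so by definition $x \in R_\gamma \setminus \mathrm{Int}\,R_\gamma$ for some valid finite string $\gamma$ of some length $n$. Using (\ref{MP6}) and (\ref{MP8}), the elements of $G(n)$ cover $M$ with pairwise disjoint interiors; since $x \notin \mathrm{Int}\,R_\gamma$, every neighborhood of $x$ meets some $R_\delta$ with $\delta \neq \gamma$, and by finiteness of $G(n)$ plus closedness of the $R_\delta$, some such $R_\delta$ actually contains $x$. The step where I expect mild care is extending $\gamma$ and $\delta$ to distinct infinite representations: starting from $x \in R_\gamma$, (\ref{MP11}) gives $x \in R_{\gamma i}$ for some valid $i$, and iterating produces an infinite string $\a$ with $x \in R_{\a_0 \cdots \a_k}$ for every $k$; by (\ref{MP13}) the diameters tend to zero, so $\{x\} = R_\a$. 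Applying the same extension procedure starting from $\delta$ yields a second representation $\b$, and $\a \neq \b$ because they differ in at least one coordinate $\leq n$.

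For the measure claim, I would write
\[
\partial \;=\; \bigcup_{n=0}^{\infty} \bigcup_{\a \in \Sigma(n)} \bigl(R_\a \setminus \mathrm{Int}\,R_\a\bigr),
\]
which is a countable union since each $\Sigma(n)$ is finite; by (\ref{MP12}) each summand has $\sigma$-measure zero, so $\sigma(\partial) = 0$, and by the first part this is exactly the set of points with non-unique representations. The main obstacle is the extension argument in ($\Leftarrow$), specifically checking that two \emph{distinct} infinite extensions are obtained, which is guaranteed because $\gamma$ and $\delta$ already disagree at a finite index before extension begins.
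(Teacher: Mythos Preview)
Your proposal is correct and follows essentially the same approach as the paper's own proof: both directions of the biconditional are argued via (\ref{MP8}) and the covering/finiteness argument with (\ref{MP6}) and (\ref{MP11}), and the measure-zero claim via (\ref{MP12}) and countable additivity. The only cosmetic difference is that the paper additionally tracks that $x$ remains on the boundary of each extended cylinder, whereas you extend directly via (\ref{MP11}) and invoke (\ref{MP13}) to identify the limit point---your version is in fact slightly cleaner.
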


\begin{proof}  Let $\a$ and $\b$ be two distinct representations of $x$.  Then there exists a least $n \in \NN \cup \{0\}$ such that $\a_n \neq \b_n$.  By (\ref{MP8}), $$\textrm{Int}R_{\a_0 \cdots \a_n} \cap \textrm{Int}R_{\b_0 \cdots \b_n} = \emptyset.$$ Since $x \in R_{\a_0 \cdots \a_n} \cap R_{\b_0 \cdots \b_n}$, it must lie on the boundary.

Let $x$ be on the boundary of $R_\a$ for $\a \in \Sigma(n)$ for some $n \in \NN \cup \{0\}$.  Since $M$ is a manifold, every open neighborhood of $x$ intersects $R_\a$ and $R_\a^c$.  By (\ref{MP6}), $\cup_{\b \in \Sigma(n) \backslash \{\a\}} R_\b \supset R_\a^c$.  Now assume that there exists an open neighborhood $U$ of $x$ such that $U \cap R_\b = \emptyset$ for all $\b \in \Sigma(n) \backslash \{\a\}$.  Hence, $U \cap R_\a^c = \emptyset$, a contradiction.  Thus, $x \in R_\b$ for some $\b \in \Sigma(n) \backslash \{\a\}$.  Since $R_\a$ and $R_\b$ have disjoint interiors, $x$ lies on the boundary of $R_\b$.  Then, by (\ref{MP11}), for every $p \in \NN \cup \{0\}$, there exists a valid string $a^p:=\a \gamma_0 \cdots \gamma_p$ such that $x$ is on the boundary of $R_{a^p}$.  Similarly for $\b$, we obtain $R_{b^q}$ for all $q \in \NN \cup \{0\}$.  Since $\a \neq \b$, $a_\infty \neq b_\infty$.

The boundary of all elements of all generations of the Markov partition has zero measure by (\ref{MP12}).  This is a countable union and hence measure zero.
\end{proof}

Finally, we have (recall that we denote the $(0,Q)$-substring of a $\gamma \in \Sigma(\infty)$ by $\gamma(Q)$):

\begin{lemm}\label{lemmBndPoints}  Let $x \in M$ be a point with representations $\gamma^1, \cdots, \gamma^r.$  Then, for every $Q \in \NN \cup \{0\}$, there exists an open neighborhood $U$ of $x$ such that $U \subset \cup_{t=1}^r \textrm{Int}R_{\gamma^t(Q)} \cup \partial$.
\end{lemm}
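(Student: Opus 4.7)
The plan is to shrink to an open ball around $x$ small enough that it misses every element of the $Q$th generation $G(Q)$ of $\Rc$ that does not already contain $x$. Since the alphabet has $s$ letters, $\Sigma(Q)$ is finite, so there are only finitely many sets $R_\a$ with $\a \in \Sigma(Q)$. Each such $R_\a$ is closed, so whenever $x \notin R_\a$ we have $d(x, R_\a) > 0$; letting $\delta$ be the minimum of these finitely many positive distances over the collection $\{\a \in \Sigma(Q) : x \notin R_\a\}$ yields $\delta > 0$, and I would take $U := B(x, \delta)$.

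The next step is to identify exactly which $\a \in \Sigma(Q)$ satisfy $x \in R_\a$. Given such an $\a$, I would repeatedly apply property (\ref{MP11}): at each stage, $R_\a$ equals the union of its children $R_{\a i}$, so $x$ must lie in at least one of them. Iterating produces a valid infinite string $\tilde{\gamma}$ beginning with $\a$ such that $x \in R_{\tilde{\gamma}(n)}$ for every $n$, i.e., $\tilde{\gamma}$ is a representation of $x$. By hypothesis the representations of $x$ are exhausted by $\gamma^1, \ldots, \gamma^r$, so $\tilde{\gamma} = \gamma^t$ and hence $\a = \gamma^t(Q)$ for some $t$. Combining this with the covering property (\ref{MP6}) applied at generation $Q$, every $y \in U$ lies in some $R_{\gamma^t(Q)}$ with $t \in \{1, \ldots, r\}$.

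To conclude, I would decompose each cell into interior and topological boundary. By (\ref{MP7}), $R_{\gamma^t(Q)} = \overline{\textrm{Int}R_{\gamma^t(Q)}}$, so $R_{\gamma^t(Q)} \setminus \textrm{Int}R_{\gamma^t(Q)}$ is precisely the topological boundary of $R_{\gamma^t(Q)}$, which is contained in $\partial$ by the very definition of $\partial$. Thus each $y \in U$ either lies in $\textrm{Int}R_{\gamma^t(Q)}$ for some $t$, or lies on the boundary of one such cell and hence in $\partial$, giving the desired inclusion $U \subset \bigcup_{t=1}^r \textrm{Int}R_{\gamma^t(Q)} \cup \partial$.

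I expect no serious obstacle here: the argument rests only on the finiteness of $\Sigma(Q)$, the closedness of the $R_\a$, the characterization of representations via (\ref{MP11}), and the definition of $\partial$. The only mildly delicate point is the implicit use of the hypothesis that $\gamma^1, \ldots, \gamma^r$ are \emph{all} the representations of $x$, which is precisely what allows a candidate extension to be matched to one of the listed $\gamma^t$'s.
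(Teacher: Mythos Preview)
Your proposal is correct and follows essentially the same approach as the paper: both arguments hinge on the finiteness of $\Sigma(Q)$, the positive distance from $x$ to each closed $R_\a$ not containing $x$, and the observation that any $\a\in\Sigma(Q)$ with $x\in R_\a$ extends via (\ref{MP11}) to a representation of $x$ and hence must equal some $\gamma^t(Q)$. The only cosmetic difference is that the paper phrases this as a proof by contradiction (taking a sequence $y_n\to x$ outside the target set), whereas you give the direct version and finish by splitting each $R_{\gamma^t(Q)}$ into its interior and its topological boundary (which lies in $\partial$ by definition).
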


\begin{proof} Assume not.  Then there exists a $Q$ such that for any open neighborhood $U$ of $x$, $$U \backslash (\cup_{t=1}^r \textrm{Int}R_{\gamma^t(Q)} \cup \partial) \neq \emptyset.$$  Let $\{U_n\}$ be a family of shrinking balls centered at $x$ for which the above holds.  Thus, there exists $y_n \in U_n$ such that $y_n \notin \cup_{t=1}^r \textrm{Int}R_{\gamma^t(Q)} \cup \partial$.  

Let $\b \in \Sigma(Q)$ for which $x \notin R_\b$.  Then there exists, by the compactness of $R_\b$ (a closed set in $M$ compact), some positive minimum distance between $R_\b$ and $x$.  As $|\Sigma(Q)| < \infty$, there exists some positive minimum distance $\Delta$ between any such $R_\b$ and $x$.

Now, $y_n \in R_{\a^n}$ for some $\a^n \in \Sigma(Q)$, and $\a^n \neq \gamma^t(Q)$ for any $t$.  If $x \notin R_{\a^n}$, then the distance between $y_n$ and $x$ is at least $\Delta$.  For $n$ large, this is a contradiction.  Hence, for some $\a := \a^n$, $x \in R_\a$; consequently, there exists at least one valid completion $\a\delta_0 \cdots \in \Sigma(\infty)$, which is a representation of $x$.  It is different from $\gamma^1, \cdots, \gamma^r$, a contradiction.  \end{proof}

\subsection{Details of the New Proof.}\label{subsecCorrect}  In this subsection, we prove Theorem~\ref{thmfullHD1}.  This proof is also the author's correction of the proof of Theorem~\ref{thmUrbexpanding}.  

Choose a Markov partition with small diameter $\Rc := \{R_1, \cdots, R_s\}$.  It is easy to see that the number of representations of every point is less than or equal to some natural number $P_0$.

Before giving the proof, we need some definitions and lemmas.  Two elements of the same generation are \textbf{adjacent} if their intersection is nonempty and \textbf{non-adjacent} if their intersection is empty.  For a point $x \in M$, define the \textbf{adjacency set of $x$ in generation $N$}:  \[\Phi_N(x) = \{R \in G(N) \mid R \ni x\}.\] 

The next two lemmas provide some basic facts about missing preimages.

\begin{lemm}\label{lemmDisOrb}Let $E$ be a set of points whose forward orbits miss an open set $U$.  Then $E$ is also a set of points whose forward orbits miss the open set $T^{-n}(U)$ for any $n \in \NN$.
\end{lemm}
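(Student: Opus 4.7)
The plan is to make the contrapositive observation that meeting $T^{-n}(U)$ along the forward orbit would force meeting $U$ itself a few steps later, which is forbidden by hypothesis.

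First I would note that since $T$ is continuous (it is $C^2$), the preimage $T^{-n}(U)$ is open, so the statement is well-posed as an ``open set'' assertion. Then I would fix $x \in E$ and suppose, for contradiction, that some forward iterate $T^k(x)$ lies in $T^{-n}(U)$. By the very definition of preimage, $T^n\bigl(T^k(x)\bigr) = T^{k+n}(x)$ then lies in $U$. But $T^{k+n}(x)$ is itself a point of the forward orbit of $x$, and by hypothesis that orbit misses $U$. This contradiction gives the lemma.

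There is essentially no obstacle here; the content is just the elementary identity $T^n \circ T^k = T^{k+n}$ together with continuity of $T$ to ensure that $T^{-n}(U)$ is open. The only mild care needed is in keeping track of whether ``forward orbit'' is indexed from $k=0$ or $k=1$: in either convention the argument goes through, because $k \geq 0$ (resp.\ $k \geq 1$) implies $k + n \geq 0$ (resp.\ $k + n \geq 1$), so the contradicting iterate is still in the forward orbit. Thus the proof reduces to one line of set-theoretic manipulation and needs no appeal to the Markov partition machinery developed above.
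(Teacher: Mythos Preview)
Your argument is correct and is essentially identical to the paper's own proof, which also picks a point $y$ in the forward orbit lying in $T^{-n}(U)$ and observes that $T^n(y)$ then lies in $\Or_T^+(x)\cap U$, a contradiction. Your added remarks about continuity of $T$ and the indexing convention are harmless extras not present in the paper's one-line version.
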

\begin{proof}  Let $x \in E$.  Then $\Or_T^+(x) \cap U = \emptyset$.  If $y \in \Or^+_{T}(x) \cap T^{-n}(U)$, then $T^n(y) \in \Or_T^+(x) \cap U$, a contradiction.  
\end{proof}

\begin{lemm}\label{coroDisOrb}
Let $E$ be a set of points whose forward orbit closures miss a point $y$. Then $E$ is also a set of points whose forward orbit closures miss $T^{-n}(y)$ for any $n \in \NN$.\end{lemm}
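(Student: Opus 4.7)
The plan is to exploit continuity of the iterate $T^n$ to pull back an accumulation at a preimage of $y$ to an accumulation at $y$ itself, contradicting the hypothesis. This is essentially a one-line argument, so the proposal is really just a sketch of that argument with the contradiction spelled out.

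Fix $x \in E$. The hypothesis asserts $y \notin \overline{\Or^+_T(x)}$, and we want to show $T^{-n}(y) \cap \overline{\Or^+_T(x)} = \emptyset$. Suppose for contradiction that there exists $z \in T^{-n}(y) \cap \overline{\Or^+_T(x)}$. Choose a sequence of nonnegative integers $k_j$ such that $T^{k_j}(x) \to z$. The iterate $T^n \colon M \to M$ is continuous (indeed $C^2$), so applying it to the convergent sequence gives $T^{k_j + n}(x) \to T^n(z) = y$.

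Each term $T^{k_j+n}(x)$ lies in $\Or^+_T(x)$ because $k_j + n \geq 0$, so $y$ is a limit point of the forward orbit of $x$. This means $y \in \overline{\Or^+_T(x)}$, contradicting the assumption that the forward orbit closure of $x$ misses $y$. Hence no such $z$ exists, and $\overline{\Or^+_T(x)} \cap T^{-n}(y) = \emptyset$ for every $x \in E$, which is the claim.

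There is no serious obstacle in this proof; the only thing to watch is that the shifted sequence of iterates genuinely remains in $\Or^+_T(x)$, which is immediate from $k_j + n \geq 0$. In spirit this is the closure-analogue of Lemma~\ref{lemmDisOrb}, whose proof for open sets replaced continuity with the fact that $T^{-n}$ of an open set is open; here continuity of $T^n$ plays the dual role.
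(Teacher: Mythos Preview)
Your proof is correct and takes essentially the same approach as the paper's: both exploit continuity of $T^n$ together with the forward-invariance $T^n(\Or^+_T(x)) \subset \Or^+_T(x)$. The only difference is cosmetic---the paper phrases it via open sets (taking $T^{-n}(U)$ as a neighborhood of the preimage point, where $U$ is a neighborhood of $y$ missed by the orbit), while you phrase it sequentially; in a metric space these are equivalent formulations of the same argument.
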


\begin{proof}  Let $x \in E$.  Then exists an open neighborhood $U$ around $y$ such that $\Or_T^+(x) \cap U = \emptyset$.  Let $w \in T^{-n}(y)$, and let $V$ be any neighborhood of $w$.  If $z \in \Or^+_{T}(x) \cap V\cap T^{-n}(U)$, then $T^n(z) \in \Or_T^+(x) \cap U$, a contradiction.\end{proof}

Finally, recall that $\overline{A}$ denotes the topological closure of a subset $A \subset M$.  Also, for $x \in M$, $\Or^+_T(x)$ denotes the forward orbit of $x$ under the self-map $T$, and, for $\a \in \Sigma(\infty)$, $\a(n)$ denotes $\a_0 \cdots \a_n$.

\begin{proof}[Proof of Theorem~\ref{thmfullHD1}]

\begin{rema}In this proof, if we let $T$ act on a representation, we are implicitly using the aforementioned semi-conjugacy, as this action denotes left shift.\end{rema} \bigskip

By Lemma~\ref{coroDisOrb}, if any two of the $x_1, \cdots, x_p$ have forward orbits that intersect, we may replace both of these points with a point in the intersection of their forward orbits and still prove the theorem.  Repeat.  Hence, without loss of generality, we may assume that $x_1, \cdots, x_p$ have pairwise disjoint forward orbits.

Let $\bar{\gamma}^1, \cdots, \bar{\gamma}^P$ be all possible representations of $x_1, \cdots, x_p$ (all representations of the same point are included in this list).  Hence, $P \leq p P_0$.  Also, there exists a least generation $\tilde{n}$ such that $|G(\tilde{n})| > P$ and $\delta_T \lambda^{-\tilde{n}} < 1$.

Let us collect these representations thus:  \[\{\bar{\gamma}^1\}, \cup_{t=0}^{3s}\{T^t(\bar{\gamma}^2)\}, \cdots, \cup_{t=0}^{3s}\{T^t(\bar{\gamma}^P)\}.\]  From each collection, pick exactly one element; call this element $\tilde{\gamma}^j$.  Because of the pairwise disjoint orbits, the chosen elements are distinct representations.  Hence, there exists $\tilde{N} \in \NN$ such that for all $n \geq \tilde{N}$, $\tilde{\gamma}^1(n-2s), \cdots, \tilde{\gamma}^P(n-2s)$ are distinct.  Repeat over all such possible combinations, and take the largest $\tilde{N}$.  

If $T^t(\bar{\gamma}^j) = a \cdots$ for some general block $a$, set $Q_{j,t}=8s-4$.  Otherwise, after the first general block $a$, there exists a general block $b$ of least last index $J\geq1$ such that $a \neq b$ (i.e. $a$ is not equivalent to $b$), and set $Q_{j,t} = \max(J+ 2s, 8s-4)$.  Set $Q = \max\{Q_{j,t} \mid j=1, \cdots, P \textrm{ and } t=0, \cdots, 3s\}.$

Let \[q_0 = \max(\tilde{N}, 2sP+1,Q, \tilde{n}).\]  By Lemma~\ref{lemmDistinctDisjoint}, there exists a sequence of integers $\{q_i\}$ greater than or equal to $q_0$ such that $\bar{\gamma}^1_{q_i-1}$ is nondegenerate.  Let us now fix a $q \in \{q_i\}$ and set \[\gamma^1 :=\bar{\gamma}^1_0 \cdots \bar{\gamma}^1_{q}.\]  Thus, Lemma~\ref{lemmPartMatch} applies to every such $\gamma^1$.

For each $2 \leq j \leq P$, there exists, by Lemma~\ref{lemmDistinctDisjoint}, a least $K_j \in \{0, \cdots, s-1\}$ such that $(T^{K_j}(\bar{\gamma}^j))_{q-1}$ is nondegenerate.  Set \[\gamma^j :=(T^{K_j}(\bar{\gamma}^j))_0 \cdots (T^{K_j}(\bar{\gamma}^j))_{q}.\]  Note that Lemma~\ref{lemmPartMatch} can individually apply to each $\gamma^j$.

Define \begin{align} E_k := E_k(q) := \{& R_\a \mid \a \in \Sigma(kq) \textrm{ and } T^n(R_\a) \cap \cup_{j=1}^P\textrm{Int}R_{\gamma^j} = \emptyset \nonumber \\  & \textrm{for every } n=0, 1, \cdots, (k-1)q\}.\nonumber\end{align}  Hence, $R_\a \in E_k$ if and only if all of the $\gamma^1, \cdots, \gamma^P$ are not substrings of $\a
$.
 
As in~\cite{Ur}, we wish to show that $\{E_k\}$ is strongly tree-like so that we can apply Lemma~\ref{lemmMMUr}.  Let \[K := \cup E_1\] and \[E(q) := \cap_{k=1}^\infty \cup E_k.\]  As in~\cite{Ur}, we note that \[d_k < \delta_T \lambda^{-qk}\] by (\ref{MP13}).  Verifying (\ref{MM2}), (\ref{MM3}), (\ref{MM5}), and (\ref{MM6}) is routine and can be found in~\cite{Ur}.    

We, however, must correct the estimate of $\Delta_k$.  (With this estimate, we will also verify (\ref{MM4}).)  Let $R_\a \in E_k$.  Thus, no match of any of the $\gamma^j$'s exists with $\a$.  If, for a $\gamma^j$, there are no partial matches, then any valid concatenation (of the correct length) of $\a$ will produce an element of $E_{k+1}$.  For the remaining $\gamma^j$'s, there are partial matches, and we will apply Lemma~\ref{lemmPartMatch} serially.  Each of these remaining $\gamma^j$'s has a partial match with smallest head.  Pick one of these $\gamma^j$'s (needs not be unique) with the least smallest head $h$; call it $\gamma$.  Let $\gamma'$ be one of the $\gamma^j$'s except for $\gamma$, and denote the smallest head of $\gamma'$ by $h'$.  Thus $h \leq h'$.

Let $b^0$ and $b^1$ be chosen as in Lemma~\ref{lemmPartMatch} applied to $\gamma$.  Then there is no match of $\gamma$ with $\a b^0 b^1 \b_0\cdots \b_{k'}$ where $\b_0, \cdots, \b_{k'}$ are any letters that make $\a b^0 b^1 \b_0\cdots \b_{k'} \in \Sigma((k+1)q)$. 

\begin{sublemm}There is no match of $\gamma'$ with $\a b^0 b^1$.
\end{sublemm}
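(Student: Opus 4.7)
The plan is to derive a contradiction by assuming a full match of $\gamma'$ with $\a b^0 b^1$ exists at some head $i$. Notably, the proof will not invoke the intricate construction of $b^0$ and $b^1$ from Lemma~\ref{lemmPartMatch}; only the length bounds $l(b^0), l(b^1) \leq s$ will play a role.

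First, I would bound $i$. Since by hypothesis $\a$ admits no match of $\gamma'$ and $l(b^0 b^1) \leq 2s < q$ (using $q \geq q_0 \geq 2sP+1$), any such match must extend into $b^0 b^1$, giving $(k-1)q + 1 \leq i \leq (k-1)q + 2s$. Moreover the substring $\a_i \cdots \a_{kq}$ is itself a partial match of $\gamma'$ with $\a$, so $i \geq h'$. Combined with $h \geq (k-1)q + 1$, this forces the key bound $d := h' - h \in [0, 2s-1]$.

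Next, I would extract a shift relation. The partial matches of $\gamma$ at head $h$ and of $\gamma'$ at head $h'$ both describe the common suffix $\a_{h'} \cdots \a_{kq}$ of $\a$; equating gives $\gamma'_p = \gamma_{d + p}$ for $p = 0, 1, \ldots, kq - h'$. Since $kq - h' \geq q - 2s$, this relation holds for all $p \in [0, q - 2s]$. Translating through $\gamma^j = T^{K_j}(\bar{\gamma}^j)(q)$ and $\gamma^{j'} = T^{K_{j'}}(\bar{\gamma}^{j'})(q)$ (with the convention $K_1 := 0$), the shift relation becomes the prefix equality
\[
T^{K_{j'}}(\bar{\gamma}^{j'})(q - 2s) \;=\; T^{d + K_j}(\bar{\gamma}^{j})(q - 2s).
\]
Since $d + K_j \leq (2s-1) + (s-1) \leq 3s$ and $K_{j'} \leq s - 1 \leq 3s$, both sides are length-$(q - 2s + 1)$ prefixes of representatives drawn from the $j$-th and $j'$-th collections used to define $\tilde N$. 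Because $j \neq j'$ and $q \geq q_0 \geq \tilde N$, the ``largest over all combinations'' choice of $\tilde N$ forces these two prefixes to be distinct, contradicting the displayed equality. Hence no match of $\gamma'$ with $\a b^0 b^1$ exists.

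The main obstacle is the bookkeeping of shifts: one must verify that $d + K_j$ stays within the range $\{0, \ldots, 3s\}$ dictated by the collections---in particular in the boundary case where $j = 1$, so that $T^{d+K_j}(\bar{\gamma}^1)$ still corresponds to a valid representative for the $\tilde N$-distinctness---and one must confirm that the overlap length $kq - h' \geq q - 2s$ is exactly what activates that distinctness. A pleasant feature of this approach is that the sublemma ultimately rests only on the setup enforcing distinctness of $3s$-shifted representations, and not on the finer details of the letters chosen for $b^0, b^1$ in Lemma~\ref{lemmPartMatch}.
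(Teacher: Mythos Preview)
Your proposal is correct and follows essentially the same approach as the paper: assume a match, bound $h'-h$ by roughly $2s$ using only $l(b^0 b^1)\le 2s$, and then invoke the $\tilde N$-distinctness of shifted representations to contradict the agreement on the first $q-2s+1$ letters forced by the two partial matches. Your write-up is more explicit about the index bookkeeping (and you rightly flag the $j=1$ boundary case), but the underlying argument is the same as the paper's.
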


\begin{proof}  Assume not.  Let us denote $\a = \a_0 \cdots \a_N$.  Since $l(b^0 b^1) \leq 2s$, all but at most the last $2s$ letters of $\gamma'$ are in the partial match with head $h'$.  Consequently, all but at most the last $2s$ letters of $\gamma$ are, likewise, in the partial match with head $h$.  It is easy to see that \[N + 2s - h' \geq q,\] and therefore \[h' -h \leq 2s\] since $h \geq N-q$.  Now, by construction, $T^{h'-h}(\gamma)$ and $\gamma'$ disagree on at least the last $2s+1$ letters, a contradiction as both are partial matches with $\a$. \end{proof}

Remove $\gamma$ from consideration.  Now pick, among the remaining, one with the least smallest head (again, needs not be unique), and repeat applying Lemma~\ref{lemmPartMatch} with $\a$ replaced by $\a b^0 b^1$ until no more $\gamma^j$'s remain (possible since $q > 2sP$).  Therefore, after serially applying Lemma~\ref{lemmPartMatch}, we obtain \[R_{\a b^0 b^1 \cdots b^{2P-1} b^{2P} \b_0 \cdots \b_{k'}} \in E_{k+1}\] where $\b_0, \cdots \b_{k'}$ are any allowed letters.  Thus, $R_{\a b^0 b^1 \cdots b^{2P-1} b^{2P}}$ is a union of elements of $E_{k+1}$.

By Lemma~\ref{lemmBDII} and the monotonicity of $\varepsilon(\cdot)$, \[\varepsilon(2sP) \leq C \frac{\sigma(R_{\a b^0 b^1\cdots b^{2P-1} b^{2P}})}{\sigma(R_{\a})}.\]

Consequently, $\sigma(\cup E_{k+1} \cap R_\a) \geq \frac{\varepsilon(2sP)}{C} \sigma(R_\a)$.  Thus, $$\textrm{density}(E_{k+1}, R_\a) \geq \frac {\varepsilon(2sP)} C.$$

Hence, $$\frac {\sum_{j=1}^k \log \Delta_j} {\log d_k} \leq \frac{k \log \frac {\varepsilon(2sP)} C}{\log \delta_T -qk \log \lambda}.$$

Applying Lemma~\ref{lemmMMUr}, we obtain  $$HD(E(q)) \geq \dim M + \frac {\log\frac {\varepsilon(2sP)} C}{q\log\lambda} \geq \dim M + \frac {\log\frac {\varepsilon(2spP_0)} C}{q\log\lambda}$$ for all $q \in \{q_i\}$. 

\begin{sublemm}
The set $E(q)$ is also a set of points whose forward orbits miss neighborhoods of $x_1, \cdots, x_p$.
\end{sublemm}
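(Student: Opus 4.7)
My plan is to proceed in three parts. First, I extract a single ``good'' representation of $x$: since every point of $M$ has at most $P_0$ representations in $\Sigma(\infty)$, the sets $\Pi_k := \{\nu \in \Sigma(\infty) : \nu \textrm{ represents } x \textrm{ and } \nu(kq) \in E_k\}$ form a decreasing sequence of nonempty (because $x \in \cup E_k$ for every $k$) finite sets, so $\cap_k \Pi_k \neq \emptyset$. Any $\nu$ in this intersection represents $x$ and contains no $\gamma^j$ as a substring. For every $n \geq 0$, the shift $T^n(\nu)$ is then a representation of $T^n(x)$ whose $(q+1)$-prefix is not any $\gamma^j$. If $T^n(x)$ were to lie in $\textrm{Int}\,R_{\gamma^j}$, Lemma~\ref{lemmReps} would force it to have a unique representation beginning with $\gamma^j$ (hence equal to $T^n(\nu)$), a contradiction; so $T^n(x) \notin \textrm{Int}\,R_{\gamma^j}$ for all $n$ and $j$.

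Second, I would build neighborhoods of each $x_i$ via Lemma~\ref{lemmBndPoints}. Fix $x_i$ and let $\bar\gamma^{j_1},\ldots,\bar\gamma^{j_r}$ be its representations. Applying Lemma~\ref{lemmBndPoints} with $Q := q + s - 1$ yields an open $U_i \ni x_i$ with $U_i \subset \cup_l \textrm{Int}\,R_{\bar\gamma^{j_l}(Q)} \cup \partial$. By construction, $\gamma^{j_l}$ is exactly the $(q+1)$-substring of $\bar\gamma^{j_l}$ at positions $K_{j_l}$ through $K_{j_l}+q$, which (since $K_{j_l} + q \leq Q$) is contained inside $\bar\gamma^{j_l}(Q)$. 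Hence $T^n(x) \in \textrm{Int}\,R_{\bar\gamma^{j_l}(Q)}$ would force the unique representation $T^n(\nu)$ of $T^n(x)$ to begin with $\bar\gamma^{j_l}(Q)$, placing $\gamma^{j_l}$ as a substring of $\nu$ at position $n + K_{j_l}$ --- a contradiction to the first part. So $T^n(x) \in U_i$ forces $T^n(x) \in \partial$.

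The hard part will be the boundary case $T^n(x) \in U_i \cap \partial$, since then $T^n(x)$ has multiple representations and one cannot immediately reproduce the substring contradiction from representations other than $T^n(\nu)$. My plan is to first shrink $U_i$ to a ball meeting no $R_\beta \in G(Q)$ with $\beta \notin \{\bar\gamma^{j_l}(Q)\}_l$ (available by compactness, since $x_i$ is at positive distance from such $R_\beta$); then any orbit point in $U_i$ lies in some $R_{\bar\gamma^{j_l}(Q)}$, and applying the homeomorphism $T^{K_{j_l}}$ (small-diameter injectivity) places $T^{n+K_{j_l}}(x) \in R_{\gamma^{j_l}} \cap \partial$. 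The construction's inclusion of the shifted collections $\cup_{t=0}^{3s}\{T^t(\bar\gamma^j)\}$ is set up precisely so that the representations arising at these iterated boundary points are themselves accounted for among the $\gamma^{j'}$'s, allowing one to close an inductive boundary-chasing loop --- bounded by the finiteness of $G(Q)$ and the distinctness of prefixes guaranteed by $\tilde N$ --- into a contradiction. Once $T^n(x) \in U_i$ is excluded, $U_i$ is the required neighborhood of $x_i$ missed by the forward orbit of $x$.
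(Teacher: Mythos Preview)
Your Parts~1 and~2 are fine; in fact your finite-intersection extraction of a single representation $\nu$ with no $\gamma^j$ as substring is a clean way to produce what the paper's proof simply calls ``an $\alpha \in \Sigma(\infty)$'' corresponding to $y \in E(q)$.

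Part~3, however, has a genuine gap. Your proposed boundary-chasing relies on the claim that the shifted collections $\cup_{t=0}^{3s}\{T^t(\bar\gamma^j)\}$ were introduced so that ``the representations arising at these iterated boundary points are themselves accounted for among the $\gamma^{j'}$'s.'' This is a misreading of the construction: those shifted collections are used only to choose $\tilde N$ (forcing distinctness of prefixes) and the constant $Q$ (so that Lemma~\ref{lemmPartMatch} applies). They do \emph{not} enlarge the family $\{\gamma^1,\ldots,\gamma^P\}$ of strings being avoided; that family is indexed exactly by the representations $\bar\gamma^1,\ldots,\bar\gamma^P$, one $\gamma^j$ per representation. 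So when $T^{n+K_{j_l}}(x)$ lands on $\partial R_{\gamma^{j_l}}$, the \emph{other} representations of that boundary point need not begin with any $\gamma^{j'}$, and your inductive loop has nothing to bite on. The vague appeal to ``finiteness of $G(Q)$ and distinctness of prefixes'' does not supply a contradiction.

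The paper sidesteps the whole issue with a single observation you are missing: work with the \emph{set} $T^n(R_{\nu(kq)})$ for $k$ large, not the point $T^n(x)$. Since $U$ is open and $T^n(R_{\nu(kq)}) = R_{\nu_n\cdots\nu_{kq}}$ is the closure of its interior, their intersection contains an interior point $z'$ of $R_{\nu_n\cdots\nu_{n+q+s}}$. By (\ref{MP8}), $z'$ lies in a \emph{unique} element of $G(q+s)$; since $z' \in U \subset \cup\Phi_{q+s}(x_i)$, that element must be some $R_\beta$ with $\beta \in \Phi_{q+s}(x_i)$, hence $\beta = \nu_n\cdots\nu_{n+q+s}$ equals some $\bar\gamma^{j}(q+s)$. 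Then $T^{n+K_j}(R_{\nu(kq)}) \subset R_{\gamma^j}$, contradicting $R_{\nu(kq)} \in E_k$. No boundary-chasing is needed.
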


\begin{proof}  For interior points in $\{x_1, \cdots, x_p\}$, apply Lemma~\ref{lemmDisOrb}.

Let $x \in \{x_1, \cdots, x_p\}$ be a boundary point.  By (the proof of) Lemma~\ref{lemmBndPoints}, there exists an open set $U \ni x$ such that $U \subset \cup \Phi_{q+s}(x)$.  We claim that all the points in $E(q)$ have forward orbits which miss the open set $U$.  Assume not.  Then there exist a $y \in E(q)$, which corresponds to an $\a \in \Sigma(\infty)$, and $n$ such that $T^n(y) \in U$.  Let $k \in \NN$ be chosen so that $kq \geq n+q+s$. Hence, $T^n(R_{\a(kq)}) \cap U \neq \emptyset$.  Thus, $T^n(R_{\a(kq)}) \subset R_{\b}$ for some $\b \in \Phi_{q+s}(x)$.  Now $\b$ is equivalent to one of the representations of $x$; say it is $\bar{\gamma}^j$.  Thus, $T^{n+K_j}(R_{\a(kq)}) \subset T^{K_j}(R_{\b}) \subset T^{K_j}(R_{\bar{\gamma}^j(q+K_j)}) = R_{\gamma^j}$, a contradiction. \end{proof}

Letting $q_i \rightarrow \infty$, we have shown our desired result:  for any points $x_1, \cdots, x_p \in M$, \[F_T(x_1, \cdots, x_p) := \{x \in M \mid \{x_1, \cdots, x_p\} \cap \overline{\Or^+_T(x)} = \emptyset\}\] has full Hausdorff dimension (i.e. = $\dim M$). \end{proof}

\begin{rema} If one simply wishes to correct the proof of Theorem~\ref{thmUrbexpanding}, one can significantly simplify the above proof by considering missing only one point, an interior point.  
\end{rema}

In the next section, we will see how the author's correction leads to a generalization in dimension one.

\section{A Generalization}  In this section, we prove Theorem~\ref{thmWin1} (or, more precisely, Theorem~\ref{thmWin2} below).  From this theorem, we obtain a useful corollary.  First, however, we must continue our study of Markov partitions from Subsection~\ref{subsecMoreMP}.

\subsection{Even More Markov Partitions}\label{subsecEvenMoreMP} In this subsection, we make a second refinement of the bounded distortion property and further study boundary points.  As in Subsections~\ref{subsecMPBasics} and~\ref{subsecMoreMP}, let us consider a Markov partition with small diameter $\Rc := \{R_1, \cdots, R_s\}$ for $T$.  

Our second refinement of bounded distortion (Theorem~\ref{thmBDI}) is as follows.  Let $R_{\min} \in \Rc$ be an element with smallest $\sigma$, and let $R_{\max} \in \Rc$ be an element with largest $\sigma$.  Define $r = \frac {\sigma(R_{\min})} {\sigma(R_{\max})}.$

\begin{lemm}\label{lemmBDIII}  Let $N \in \NN$ and $\eta$ be a valid finite string of length at least 2.  Let $R_{\eta \a}$ be an element of $G(N)$ (contained in $R_\eta$) of largest $\sigma$; let $R_{\eta \b}$ be an element of $G(N)$ of smallest $\sigma$. Then \[\frac {\sigma(R_{\eta \b})}{\sigma(R_{\eta \a})} \geq \frac r C.\]
\end{lemm}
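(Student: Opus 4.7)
The plan is a two-step reduction using bounded distortion. Let $L := |\eta| - 1 \geq 1$; the hypothesis $|\eta| \geq 2$ is used precisely to guarantee $L \geq 1$, which makes Theorem~\ref{thmBDI} available for $T^L$ on $R_\eta$. The map $T^L$ is a diffeomorphism of $R_\eta$ onto the generation-zero cell $R_{\eta_L}$, sending each descendant $R_{\eta\gamma} \in G(N)$ onto $R_{\eta_L\gamma}$.

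First I push the problem forward by $T^L$. Applying change of variables and the mean value theorem to the identity $\sigma(R_{\eta_L\gamma}) = \int_{R_{\eta\gamma}} J(T^L)\,d\sigma$ produces, for each valid $\gamma$ with $R_{\eta\gamma}\in G(N)$, a point $\xi_\gamma \in R_{\eta\gamma}$ satisfying $\sigma(R_{\eta_L\gamma}) = J(T^L)(\xi_\gamma)\,\sigma(R_{\eta\gamma})$. Bounded distortion on $R_\eta$ gives $J(T^L)(\xi_\alpha)/J(T^L)(\xi_\beta) \in [1/C,\, C]$, so dividing these identities for $\gamma = \alpha$ and $\gamma = \beta$ yields
\[
\frac{\sigma(R_{\eta\beta})}{\sigma(R_{\eta\alpha})}
\;=\;
\frac{J(T^L)(\xi_\alpha)}{J(T^L)(\xi_\beta)} \cdot \frac{\sigma(R_{\eta_L\beta})}{\sigma(R_{\eta_L\alpha})}
\;\geq\;
\frac{1}{C} \cdot \frac{\sigma(R_{\eta_L\beta})}{\sigma(R_{\eta_L\alpha})}.
\]

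Next I aim for $\sigma(R_{\eta_L\beta})/\sigma(R_{\eta_L\alpha}) \geq r$. Applying $T^{|\alpha|}$ sends each of $R_{\eta_L\alpha}, R_{\eta_L\beta}$ onto a generation-zero cell in $\Rc$, namely $R_{\alpha_{|\alpha|-1}}$ and $R_{\beta_{|\alpha|-1}}$ by~(\ref{MP9}), and the sizes of any two such cells differ by a factor of at most $1/r$ by definition of $r$. I expect the hard part of this step to be that Theorem~\ref{thmBDI} controls $J(T^{|\alpha|})$ only on each individual descendant $R_{\eta_L\gamma}$ and not uniformly on the common generation-zero parent $R_{\eta_L}$, so a naive comparison of $\sigma(R_{\eta_L\alpha})$ and $\sigma(R_{\eta_L\beta})$ would incur an uncontrolled Jacobian ratio across these two cells. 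The resolution exploits the extremality of $\alpha, \beta$: Step~1 transfers that extremality to $R_{\eta_L\alpha}, R_{\eta_L\beta}$ among the children of $R_{\eta_L}$ (up to a factor of $C$), and combined with bounded distortion applied individually on each of these two cells, this aligns the terminal generation-zero cells $R_{\alpha_{|\alpha|-1}}, R_{\beta_{|\alpha|-1}}$ with the extremes $R_{\max}, R_{\min}$ of $\Rc$ up to a $C$-factor that is ultimately absorbed into the final statement. Combining the two steps delivers $\sigma(R_{\eta\beta})/\sigma(R_{\eta\alpha}) \geq r/C$, as claimed.
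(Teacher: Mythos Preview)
Your Step~1 is correct and is essentially the first half of the paper's computation, carried out cleanly via the mean value theorem and a legitimate application of Theorem~\ref{thmBDI} to $J(T^L)$ on $R_\eta\in G(L)$.

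Step~2, however, is a genuine gap. You set yourself the target $\sigma(R_{\eta_L\beta})/\sigma(R_{\eta_L\alpha})\ge r$, correctly note that Theorem~\ref{thmBDI} does not compare $J(T^{|\alpha|})$ across the two distinct cells $R_{\eta_L\alpha}$ and $R_{\eta_L\beta}$, and then offer a resolution that does not work. Extremality of $\alpha,\beta$ only tells you (after Step~1) that $R_{\eta_L\alpha}$ is the largest child of $R_{\eta_L}$ \emph{up to a factor $C$} and $R_{\eta_L\beta}$ the smallest up to $C$; it says nothing about the terminal letters $\alpha_{|\alpha|-1},\beta_{|\alpha|-1}$, which are completely arbitrary elements of $\{1,\dots,s\}$ and need not be anywhere near the extremes $R_{\max},R_{\min}$ of $\Rc$. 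Moreover there is no room to ``absorb'' another $C$-factor: the target bound is $r/C$, and Step~1 has already spent the $C$. As written, Step~2 does not establish the needed inequality.

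The paper's proof avoids the two-stage push altogether. It applies $T^N$ in a \emph{single} step, sending $R_{\eta\alpha}$ and $R_{\eta\beta}$ all the way to the generation-zero cells $R_{\alpha_{N-n}}$ and $R_{\beta_{N-n}}$. The inequality
\[
\frac{\sigma(R_{\beta_{N-n}})}{\sigma(R_{\alpha_{N-n}})}\ \ge\ r
\]
is then immediate from the definition of $r$---no extremality is invoked, since \emph{any} ratio of two cells of $\Rc$ lies in $[r,1/r]$. The entire Jacobian comparison is handled at once by bounding $\sup_{R_\eta}J(T^N)\big/\inf_{R_\eta}J(T^N)\le C$. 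In other words, where you attempt to control the distortion in two stages and cannot close the second, the paper takes the sup/inf of $J(T^N)$ over all of $R_\eta$ in one stroke and lands directly at generation zero, so that the factor $r$ comes for free.
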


\noindent (Note that $C$ is from Theorem~\ref{thmBDI}.)

\begin{proof} Let $\eta$ have length $n$.  Consider \[ r \leq \frac{\sigma(R_{\b_{N-n}})} {\sigma(R_{\a_{N-n}})} = \frac{\int_{R_{\eta\b}} J(T^{N})(x) d\sigma(x)}{\int_{R_{\eta \a}} J(T^{N})(x) d\sigma(x)} \] \[\leq \frac{\sigma(R_{\eta\b}) \sup\{J(T^{N})(x) \mid x \in R_{\eta}\}}{\sigma(R_{\eta \a}) \inf\{J(T^{N})(x) \mid x \in R_{\eta}\}} \leq C \frac{\sigma(R_{\eta\b})}{\sigma(R_{\eta\a})}.\] \end{proof}

Let us now further study boundary points.  

\begin{lemm}  \label{lemmBndToBnd}The following hold (for Markov partitions with small diameter):  \[T(\partial) \subset \partial \textrm{ and } T^{-1}(\partial) \subset \partial .\]\end{lemm}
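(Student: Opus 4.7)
The plan is to leverage the fact that the expanding map $T$ is a local diffeomorphism, and then use the Markov properties (\ref{MP5}), (\ref{MP9}), (\ref{MP10}) to transfer boundary data between generations.

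First I would show that for each $R_i \in \Rc$, since $R_i$ is compact with $\textrm{diam}(R_i) < \delta_T$, one can find an open neighborhood $N_i \supset R_i$ in $M$ still of diameter less than $\delta_T$; as $T$ is injective on sets of that diameter and is a local diffeomorphism (being $C^2$-expanding with nonsingular derivative), $T|_{N_i}$ is a homeomorphism onto the open set $T(N_i) \subset M$. Because $N_i$ is open in $M$, the $M$-boundary of any set $R \subset R_i$ coincides with its $N_i$-boundary, and analogously on the image side; thus $T|_{N_i}$ sends the $M$-boundary of $R$ bijectively onto the $M$-boundary of $T(R)$.

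For the inclusion $T(\partial) \subset \partial$, I would write $\partial$ as $\cup_\gamma \partial R_\gamma$ over all valid finite strings $\gamma$ with $l(\gamma) \geq 1$. When $l(\gamma) = 1$, the above homeomorphism on $N_{\gamma_0}$ identifies $T(\partial R_{\gamma_0})$ with $\partial T(R_{\gamma_0})$, and (\ref{MP5}) expresses $T(R_{\gamma_0})$ as a union of first-generation elements $R_j$, so $\partial T(R_{\gamma_0}) \subset \cup_j \partial R_j \subset \partial$. When $l(\gamma) \geq 2$, combining the homeomorphism on $N_{\gamma_0}$ with (\ref{MP9}) gives $T(\partial R_\gamma) = \partial R_{\gamma_1 \cdots \gamma_n} \subset \partial$. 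Taking the countable union yields the first inclusion.

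For the second inclusion $T^{-1}(\partial) \subset \partial$, I would let $x$ satisfy $T(x) \in \partial R_\gamma$ for some valid $\gamma$. Since $T(x) \in R_\gamma$, property (\ref{MP10}) writes $x \in T^{-1}(R_\gamma) = \cup_{i: A_{i,\gamma_0}=1} R_{i\gamma}$, so $x \in R_{i\gamma}$ for some such $i$. Applying the homeomorphism $T|_{N_i}^{-1}$ to $T(x) \in \partial R_\gamma$, and using $T(R_{i\gamma}) = R_\gamma$ from (\ref{MP9}), one concludes $x \in \partial R_{i\gamma} \subset \partial$. The main technical hurdle throughout is reconciling two a priori different notions of boundary (inside $M$ versus inside the open neighborhood $N_i$); this dissolves because $N_i$ is open in $M$, so the two notions agree for sets contained in $N_i$ and the local homeomorphism then does the rest.
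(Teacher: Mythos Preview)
Your argument is correct, and it takes a genuinely different route from the paper's. The paper argues via Lemma~\ref{lemmReps}: a point is in $\partial$ if and only if it lies in two distinct elements of some generation. For $T(\partial)\subset\partial$ the paper passes to a deep enough generation so that two adjacent cells $R_\a, R_\b$ containing $x$ have $\textrm{diam}(R_\a\cup R_\b)<\delta_T$; injectivity of $T$ on this union then forces $T(R_\a)\neq T(R_\b)$, so $T(x)$ again lies in two distinct cells. For $T^{-1}(\partial)\subset\partial$ the paper exploits that $T$ is an $N$-fold covering and runs a counting argument to show each preimage of $x$ lands in exactly one $R_{i\a}$ and exactly one $R_{i'\b}$ with $i\a\neq i'\b$.

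Your approach instead enlarges each $R_i$ slightly to an open $N_i$ of diameter still below $\delta_T$, on which $T$ is an injective local diffeomorphism and hence a homeomorphism onto the open set $T(N_i)$. Since boundaries relative to an open subspace coincide with boundaries in $M$, this single observation lets you read off $T(\partial R_\gamma)=\partial T(R_\gamma)$ directly and then invoke (\ref{MP5}), (\ref{MP9}), (\ref{MP10}). This is cleaner for the backward inclusion, where you bypass the paper's covering-degree count entirely, and it makes transparent that the lemma is really just ``local homeomorphisms preserve boundaries'' plus the Markov relations. The paper's argument, in exchange, stays entirely inside the symbolic framework and yields as a byproduct the exact bijection between preimages and admissible predecessor letters; your approach trades that extra information for brevity.
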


\begin{proof} Let $x \in \partial$; thus, $x$ is on the boundary of $R_\a$ for $\a \in \Sigma(n)$ for some $n \in \NN \cup \{0\}$.  Since the boundary of all elements of any given generation is contained in the boundary of all elements of the next generation, we can choose $n$ to be as large as we like.  By (\ref{MP13}), choose $n$ so large that the diameter of every element of $G(n)$ is $< \delta_T/2$.  By the proof of Lemma~\ref{lemmReps}, there exists $\b \neq \a \in \Sigma(n)$ such that $x$ is on the boundary of $R_\b$.  Thus, diam$(R_\a \cup R_\b) < \delta_T$.

By (\ref{MP9}), $T(R_\a)$ and $T(R_\b)$ are both elements of the previous generation.  They are distinct elements because $T$ is injective on $R_\a \cup R_\b$.  Thus, $T(x)$ lies in two distinct elements of the same generation, and hence it must lie on the boundary.

The map $T$ is an $N$-fold covering for some $N >1$~\cite{KS}.  Thus, the set $T^{-1}(x)$ has $N$ elements.  Let $\{i_1, \cdots, i_m\}$ be the set of letters such that $A_{i_j\a_0}=1$.  By (\ref{MP10}), if $m < N$, there exist elements $y \neq z \in T^{-1}(x)$ and a letter $i:=i_j$ for which $y, z \in R_{i\a}$.  Consequently, $T$ is not injective on $R_{i\a}$, a contradiction.  This also shows that each element of $T^{-1}(x)$ belongs to only one $R_{i_j\a}$.  Also, by (\ref{MP9}), $m \leq N$; thus, $m=N$, and each $R_{i_j\a}$ contains exactly one element of $T^{-1}(x)$.  Likewise for $\b$.  Hence, each element of $T^{-1}(x)$ lies in two distinct (because $\a \neq \b$) elements of the next generation, and thus it must lie on the boundary. \end{proof}

Let us further distinguish subsets of $\partial$.  Let $\partial_n$ denote \textbf{the set of all boundary points of all elements of $G(n)$}.  Clearly, a chain of inclusions $\partial_0 \subset \partial_1 \subset \cdots$ exists.  A point in $\partial_0$ has \textbf{weight} 0.  For $n \geq 1$, a point in $\partial_n \backslash \partial_{n-1}$ has \textbf{weight} $n$.

Also, given $\gamma \in \Sigma(n)$, let us define the following sets of valid concatenations of $\gamma$: \[\Sigma_\gamma(q) := \{\delta \in \Sigma(n+q) \mid \delta \textrm{ is equivalent to } \gamma\}.\]

These notions will be used in Subsection~\ref{sec1DimProof}.

\subsection{The Proof of the Generalization in Dimension One}\label{sec1DimProof}  In this subsection, we prove in dimension one a generalization of Theorems~\ref{thmUrbexpanding} and~\ref{thmfullHD1} (and also the aforementioned result in~\cite{AN}) and, in part, a generalization of Theorem~\ref{thmDani}.  In particular, we prove Theorem~\ref{thmWin1} (or, more precisely, Theorem~\ref{thmWin2} below).  An immediate corollary is also obtained.  

We will, in this subsection, specialize to the one-dimensional case:  consider the one-dimensional system $(S^1, \sigma, T)$ where $S^1 := \RR/\ZZ$, $\sigma$ is the probability Haar measure on $S^1$, and $$T:S^1 \rightarrow S^1$$ is a $C^2$-expanding map.

It is clear from Krzy\.zewski and Szlenk's construction of a Markov partition with small diameter (\cite{KS}, proof of Lemma 4) that

\begin{lemm} \label{lemmConnMPCircle} For any $C^2$-expanding map $T:S^1 \rightarrow S^1$, there exists a Markov partition with small diameter for which every element of every generation is path-connected.
\end{lemm}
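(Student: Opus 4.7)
The plan is to reduce the lemma to two facts: (i) on $S^1$ the Krzy\.zewski--Szlenk construction produces a Markov partition whose generators are closed arcs, and (ii) the arc property then propagates through every generation by an easy induction. The first fact is essentially what \cite{KS} establishes: one starts from a finite set of periodic points of $T$ and its preimages, which divides $S^1$ into finitely many closed arcs satisfying (\ref{MP1})--(\ref{MP5}); by taking sufficiently deep preimages we can shrink the diameter below any prescribed $\delta_T$ while preserving the arc property. So we may assume $\Rc = \{R_1,\ldots,R_s\}$ has small diameter with each $R_j$ a closed arc, which handles $G(0)$.

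For the induction, assume every element of $G(n)$ is a closed arc, and let $R_\a \in G(n)$. The key technical observation is that $T^n|_{R_\a}$ is a homeomorphism onto $R_{\a_n}$: indeed, by (\ref{MP9}) applied repeatedly, $T^k(R_\a) = R_{\a_k\cdots\a_n}$, which has diameter $< \delta_T \lambda^{-(n-k)} < \delta_T$ by (\ref{MP13}), and $T$ is injective on any set of diameter below $\delta_T$. Composing these injective restrictions yields injectivity of $T^n$ on $R_\a$, and compactness upgrades this to a homeomorphism onto the arc $R_{\a_n}$. Via this homeomorphism, one has
\[
R_{\a i} = R_\a \cap T^{-(n+1)}(R_i) = \bigl(T^n|_{R_\a}\bigr)^{-1}\bigl(R_{\a_n} \cap T^{-1}(R_i)\bigr) = \bigl(T^n|_{R_\a}\bigr)^{-1}(R_{\a_n i}),
\]
so it suffices to show $R_{\a_n i}$ is an arc.

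This in turn is a one-step version of the same observation at generation zero. Since $R_{\a_n}$ is an arc of diameter $< \delta_T$, $T|_{R_{\a_n}}$ is a homeomorphism onto $T(R_{\a_n})$, which is path-connected; by choosing the diameter of $\Rc$ small enough relative to the Lipschitz constant of $T$ we may further assume $T(R_{\a_n}) \neq S^1$, so it is a closed arc. By the Markov condition (\ref{MP5}), $T(R_{\a_n})$ is a union of elements of $\Rc$ glued at endpoints, and its intersection with $R_i$ is either $R_i$ (if $R_i$ appears in the union) or empty. Pulling back through $T|_{R_{\a_n}}$ shows $R_{\a_n i}$ is either a subarc of $R_{\a_n}$ or empty, completing the induction.

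The only delicate point, which I would not call a real obstacle, is ensuring at the outset that $\delta_T$ is small enough to guarantee both injectivity of $T$ and that forward images of partition pieces remain proper subsets of $S^1$; both conditions are met by shrinking the partition further using the refinement mechanism in \cite{KS}. Everything else is a transparent induction on generation combined with the homeomorphism $T^n|_{R_\a}\colon R_\a \to R_{\a_n}$.
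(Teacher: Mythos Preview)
Your proof is correct and follows essentially the route the paper points to: the paper itself gives no argument beyond asserting that the lemma is clear from the Krzy\.zewski--Szlenk construction in \cite{KS}, and your write-up supplies precisely the details implicit in that citation---taking the arc partition from \cite{KS} as the base case and then propagating the arc property through generations via the homeomorphism $T^n|_{R_\a}\colon R_\a \to R_{\a_n}$. One incidental remark: your precaution that $T(R_{\a_n}) \neq S^1$ is automatic, since a continuous injection from a closed interval into $S^1$ can never be surjective.
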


Endow $S^1$ with the usual metric, and let $d(A)$ denote the diameter of a set $A$.  Using Lemma~\ref{lemmConnMPCircle}, we obtain a Markov partition with small diameter $\Rc := \{R_1, \cdots, R_s\}$, which we fix. Since the elements of each generation are intervals, we may use $d$ and $\sigma$ interchangeably on these elements.

Recall the definition of $\varepsilon(\cdot)$ from Subsection~\ref{subsecMoreMP}.

\begin{lemm}\label{lemmFit} Let $\Rc := \{R_1, \cdots, R_s\}$ be a Markov partition with small diameter for which every element of every generation is path-connected.  For any closed interval $B$ such that \[d(B) < \min\{d(R_1), \cdots, d(R_s)\},\] there exists $N \in \NN$ for which an element $R_\eta \in G(N-1)$ can be chosen to satisfy \begin{equation} \label{eqnFit}2 d(R_\eta) \geq d(B) \geq \frac{\varepsilon(1)} C d(R_\eta).\end{equation}  Moreover, an element of $G(N)$ lies in both $B$ and $R_\eta$ and at least half of the interval $B$ lies in $R_\eta$.  Finally, if any element of any generation $R_\a \supset B$, then $R_\a \supset R_\eta$.

\end{lemm}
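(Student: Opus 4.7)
The plan is to choose $R_\eta$ as a deepest-generation element that still captures at least half of $B$. Precisely, consider the collection $\mathcal{S}$ of all Markov elements $R_\a$ (from any generation) satisfying $\sigma(R_\a \cap B) \geq d(B)/2$, and let $N-1$ be the largest integer $n$ such that $G(n) \cap \mathcal{S} \neq \emptyset$. This collection is nonempty: the hypothesis $d(B) < \min d(R_i)$ forces $B$ to be covered by at most two elements of $G(0)$, and the resulting one-or-two pieces sum in length to $d(B)$, so at least one has length $\geq d(B)/2$. The maximum generation is also bounded above, since $d_k \to 0$ by (\ref{MP13}) eventually prevents any element from even having diameter $d(B)/2$. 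So $N-1 \geq 0$ exists; fix any $R_\eta \in G(N-1) \cap \mathcal{S}$. This immediately delivers the half-containment assertion and the lower bound $d(R_\eta) \geq \sigma(R_\eta \cap B) \geq d(B)/2$ in (\ref{eqnFit}).

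The main obstacle is producing a child $R_{\eta i} \in G(N)$ with $R_{\eta i} \subset B$; such a child simultaneously yields the ``element of $G(N)$ in both $B$ and $R_\eta$'' assertion and, via bounded distortion (Lemma~\ref{lemmBDII}), the upper bound $d(R_\eta) \leq (C/\varepsilon(1))\, d(R_{\eta i}) \leq (C/\varepsilon(1))\, d(B)$ in (\ref{eqnFit}). By maximality of $N-1$, every child of $R_\eta$ satisfies $\sigma(\cdot \cap B) < d(B)/2$ strictly. I would split into two cases. If $R_\eta \supset B$, then $\sum_i \sigma(R_{\eta i} \cap B) = d(B)$ combined with each summand being $< d(B)/2$ forces at least three children of $R_\eta$ to meet $B$; since those children are consecutive subintervals tiling the interval $R_\eta$ and $B \subset R_\eta$ is itself an interval, any non-extremal child among those meeting $B$ is entirely inside $B$. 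If $R_\eta \not\supset B$, write $R_\eta = [a, b]$ and $B = [c, d]$ with WLOG $a < c < b < d$, so that $R_\eta \cap B = [c, b]$ has length $\geq d(B)/2$; the rightmost child $[e, b]$ of $R_\eta$ must satisfy $e \geq c$, for otherwise $\sigma([e, b] \cap B) \geq b - c \geq d(B)/2$ would place that child in $\mathcal{S}$ at generation $N$, contradicting maximality.

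For the closure property, suppose $R_\a \supset B$ for some element of any generation. Then $\sigma(R_\a \cap B) = d(B) \geq d(B)/2$, so $R_\a \in \mathcal{S}$ and its generation is at most $N-1$. Two elements of a Markov partition whose generations are comparable are either nested or have essentially disjoint interiors; in the latter case $\sigma(R_\a \cap R_\eta) = 0$, but $R_\a \cap R_\eta \supset R_\eta \cap B$ has measure at least $d(B)/2 > 0$, a contradiction. Hence $R_\a \supset R_\eta$, completing the plan.
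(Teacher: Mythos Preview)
Your argument is correct, and it takes a genuinely different route from the paper's. The paper proceeds by a case analysis on the boundary structure of $B$: it asks whether $B$ contains a weight-$0$ boundary point, and if not, finds the least generation whose boundary meets $B$, splitting further on whether one or several such points land in $B$; in each case it isolates the longer half $B^+$ of $B$ inside a single generation-$(N-1)$ element and reads off a generation-$N$ child between adjacent boundary points. Your maximal-generation argument over $\mathcal{S}$ sidesteps this case analysis entirely and gives a uniform construction; it also explicitly verifies the final nesting assertion (``if $R_\a \supset B$ then $R_\a \supset R_\eta$''), which the paper's proof leaves implicit. One small omission: your ``WLOG $a < c < b < d$'' in the case $R_\eta \not\supset B$ tacitly excludes the subcase $R_\eta \subset B$, but that is harmless since then every child of $R_\eta$ already lies in $B$.

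One point worth flagging for downstream use: your $N$ need not coincide with the paper's. The remark following the lemma fixes the convention that $N$ is the \emph{least} generation in which some element lies entirely in $B$, and this convention is invoked in the proof of Theorem~\ref{thmWin2}. Your $N$ can be strictly larger (e.g., when a generation-$N_{\mathrm{paper}}$ element contained in $B$ still has length $\geq d(B)/2$, it remains in your $\mathcal{S}$). This does not affect the lemma as stated, which only asserts existence, but you would need to reconcile the conventions before plugging your $R_\eta$ into the later Schmidt-game argument.
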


\begin{rema}  Although more than one value of $N$ may make (\ref{eqnFit}) true, we always agree to take the value of $N$ as in the proof below.  Hence,  for each $B$ there exists a unique $N$, namely $G(N)$ is the least generation in which an element of that generation lies completely in $B$.
\end{rema}

\begin{proof}

\medskip\noindent\textbf{Case 1:  $B \cap \partial_0 \neq \emptyset$.}\medskip

By length, $B$ contains exactly one point $y$ of weight $0$.  Thus, we have closed intervals $B^+$ and $B^-$ such that \[B = B^+ \cup B^-\] where \[\{y\} = B^+ \cap B^-.\]  Let $d(B^+) \geq d(B^-)$.  (Note that $B^-$ could possibly be just $\{y\}$.) 

Now there exists a least $N \in \NN$ such that $(\partial_N \backslash \{y\}) \cap B^+ \neq \emptyset$.  Hence, there exists $R_\eta \in G(N-1)$ such that $B^+ \subset R_\eta$.  Thus, \[d(B) \leq 2d(R_\eta).\]

Let $z \in (\partial_N \backslash \{y\}) \cap B^+$ be closest to $y$.  Then the interval between $y$ and $z$ in $B^+$ is an element of $G(N)$.  Denote it by $R_{\eta i}$.  Hence, by Lemma~\ref{lemmBDII}, \[\frac {\varepsilon(1)} C d(R_\eta) \leq d(B).\]  

\medskip\noindent\textbf{Case 2: $B \cap \partial_0 = \emptyset$.}\medskip

Thus, there exists a least $N \in \NN$ such that $\partial_{N} \cap B \neq \emptyset$.  

\medskip\noindent\textbf{Case 2A: $|\partial_{N} \cap B| \geq 2$.}\medskip

Thus, there exists $R_\eta \in G(N-1)$ such that $B \subset R_\eta$.  Moreover, there exists an element $R_{\eta i}$ such that $R_{\eta i} \subset B$.  As in Case 1, we obtain (\ref{eqnFit}).

\medskip\noindent\textbf{Case 2B: $|\partial_{N} \cap B| = 1$.}\medskip

Let $y$ be the point of weight $N$ in $B$.  Repeat the proof of Case 1 with this $y$. \end{proof}

To prove our generalization, we require more notation.  Let us quote some of Schmidt's original notation from~\cite{Sch2}.  We play Schmidt's game on a complete metric space $\tilde{M}$.  Let $0 < \kappa < 1$.  Given a ball $B$ of $\tilde{M}$ with radius $\tilde{r}$, let $B^\kappa$ denote the set of all balls $B' \subset B$ with radius equal to $\kappa \tilde{r}$.  

Also, recall that we denote the $(0,Q)$-substring of a $\gamma \in \Sigma(\infty)$ by $\gamma(Q)$.  Finally, note that $C$ is from Theorem~\ref{thmBDI}.  Our generalization is

\begin{theo}\label{thmWin2}Let $x_0 \in S^1$.  Then \[F_T(x_0) := \{x \in S^1 \mid x_0 \notin \overline{\Or_{T}^+(x)}\}\] is an $\frac {\varepsilon(7s+2)}{2C}$-winning set.  (If $\Rc$ has no degenerate letters, we may replace $\varepsilon(7s+2)$ with $\varepsilon(5)$.)
\end{theo}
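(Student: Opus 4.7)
The plan is to exhibit an explicit winning strategy for White that couples the Schmidt game to a ``fitted descent'' in the symbolic coding by $\Rc$. Using Lemma~\ref{lemmConnMPCircle}, fix a path-connected Markov partition $\Rc = \{R_1,\dots,R_s\}$ of $S^1$ with small diameter, and let $\gamma^1,\dots,\gamma^P$ (with $P \le 2$) be the representations of $x_0$, each truncated to a common length $q+1$, where $q$ is chosen (mimicking the preliminary setup in the proof of Theorem~\ref{thmfullHD1}) so that Lemma~\ref{lemmPartMatch} applies to every $\gamma^j$: $\gamma^j_q$ is nondegenerate, no $\gamma^j$ falls into the exceptional cases (\ref{eqn1})--(\ref{eqn2}), $q \ge 8s-4$, and $q > 2sP$.

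White's response at turn $n$ has three steps. First, apply Lemma~\ref{lemmFit} to Black's ball $B_n$ to produce a Markov interval $R_{\eta_n} \in G(N_n-1)$ with $d(R_{\eta_n}) \ge d(B_n)/2$ and an element $R_{\eta_n i_n} \in G(N_n)$ contained in $B_n \cap R_{\eta_n}$; by the ``moreover'' clause of Lemma~\ref{lemmFit} applied to the interval $R_{\nu_{n-1}} \supset B_n$ built at the previous stage, $\eta_n$ extends $\nu_{n-1}$, so the accumulated no-match invariant passes intact to $\eta_n$. Second, serially apply Lemma~\ref{lemmPartMatch} to $\gamma^1,\dots,\gamma^P$, as in the proof of Theorem~\ref{thmfullHD1}, appending at most $2sP$ forced letters plus valid fillers --- at most $7s+2$ letters in total --- to obtain a string $\nu_n$ whose Markov interval $R_{\nu_n}$ admits no match of any $\gamma^j$ for any further valid extension. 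Third, choose $W_n$ as a closed sub-interval of $R_{\nu_n} \cap B_n$ of radius $\alpha r(B_n)$, where $\alpha = \varepsilon(7s+2)/(2C)$; this fits because, by Lemma~\ref{lemmFit} and Lemma~\ref{lemmBDII},
\[
d(R_{\nu_n}) \;\ge\; \frac{\varepsilon(7s+2)}{C}\, d(R_{\eta_n}) \;\ge\; \frac{\varepsilon(7s+2)}{2C}\, d(B_n) \;=\; \alpha\, d(B_n).
\]
When $\Rc$ has no degenerate letters the buffer demanded by Lemma~\ref{lemmPartMatch} collapses, and the estimate sharpens to $\varepsilon(5)/(2C)$.

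The nested balls $B_1 \supset W_1 \supset B_2 \supset \cdots$ then converge to a point $y$ whose representation $\nu = \lim \nu_n$ contains no match of any $\gamma^j$. By the argument near the end of the proof of Theorem~\ref{thmfullHD1} (using Lemma~\ref{lemmBndPoints}), this forces the forward orbit of $y$ to miss an open neighborhood of $x_0$, so $y \in F_T(x_0)$ and White wins. The main obstacle is the fitted descent itself: one must simultaneously verify that the shrunk Markov interval $R_{\nu_n}$ still contains a sub-ball of radius $\alpha r(B_n)$ sitting inside $B_n$ (which is where the factor $1/(2C)$ in $\alpha$ comes from) and that the no-match invariant propagates across turns via the ``moreover'' clause of Lemma~\ref{lemmFit} applied to the next ball $B_{n+1} \subset W_n \subset R_{\nu_n}$. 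Tracking the precise constant $7s+2$ is technical but reduces to the bookkeeping just described.
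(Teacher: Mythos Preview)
Your fitted-descent outline captures the right picture, but there is a genuine gap at the heart of the induction. You fix the truncation length $q$ in advance, depending only on the $\gamma^j$, and then assert that after the serial application of Lemma~\ref{lemmPartMatch} the interval $R_{\nu_n}$ ``admits no match of any $\gamma^j$ for any further valid extension.'' Lemma~\ref{lemmPartMatch} does not say this: it guarantees no match only for extensions of total length at most $q$ beyond the current string. Once Black plays $B_{n+1} \in W_n^\beta$, the Markov interval $R_{\eta_{n+1}}$ produced by Lemma~\ref{lemmFit} sits in a generation $N_{n+1}$ that can exceed $l(\nu_n)$ by an amount governed by $\alpha\beta$; if $\beta$ is small enough, this jump exceeds $q$, and nothing prevents a match of some $\gamma^j$ from appearing in the uncontrolled stretch between $\nu_n$ and $\eta_{n+1}$. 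Your ``no-match invariant passes intact to $\eta_n$'' is exactly the step that fails.

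The paper's proof confronts this directly and it is the main technical point. It does \emph{not} fix the truncation in advance; instead it plays an initial phase of $L$ turns (with $L$ depending on $\beta$), lands in a generation $N$, and only then chooses the truncation $Q$ to be roughly $N$. From the initial descent it extracts the lower bound $\beta \ge \frac{r\varepsilon(1)}{2C^2}\lambda^{-Q/2}$, and a separate sublemma uses this bound to show that each subsequent turn advances the generation by at most $Q$, so that Lemma~\ref{lemmPartMatch} continues to cover the gap. The constant $7s+2$ then arises from a single representation: up to $5s+2$ letters are spent pushing from $R_\delta$ forward to a $\gamma(Q)$ with $\gamma_{Q-1}$ nondegenerate and not of exceptional type, and $b^0b^1$ costs at most $2s$ more. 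The multiple representations are handled afterwards by (SG2) and (SG4), not serially inside each turn as you propose. Your sketch needs, at minimum, a mechanism tying $q$ (or the per-turn buffer) to $\beta$, together with a proof that the generation increment per turn never outruns it.
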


\begin{proof} Let $M := S^1$ and $F := F_T(x_0)$.  Let $\gamma \in \Sigma(\infty)$ be a representation of $x_0$.  

Let $n := \frac {\varepsilon(7s+2)}{2C}$ and $0 < m < 1$.  We show that $F$ is $(n,m)$-winning.  Black starts, choosing $B_1$.  Now there is a least $J \in \NN$ such that for any choice of $B_J$, \[d(B_J) < \min_{\xi \in \Sigma(1)} (d(R_\xi)).\]  (White chooses any allowed sets for $W_1, \cdots, W_{J-1}$.  Black chooses $B_J$.)  

By Lemma~\ref{lemmFit}, there exist $N_0 \geq1$ and an element $R_\eta \in G(N_0)$ that contains at least half of $B_J$.  Since $n \leq 1/2$, choose $W_J \subset R_\eta$.  

Let us now refine the notion of constants of bounded distortion:

$$\varepsilon_\eta(q) := \min_{\delta \in \Sigma_\eta(q)} \frac{\sigma(R_\delta)} {\sigma(R_\eta)} > 0.$$

$$1 \geq \E_\eta(q) := \max_{\delta \in \Sigma_\eta(q)} \frac{\sigma(R_\delta)} {\sigma(R_{\eta})} > 0.$$

For the given $\eta$, Lemma~\ref{lemmBDIII} implies that $\frac{\varepsilon_\eta(q)}{\E_\eta(q)} \geq r/C.$  

\begin{sublemm} $\E_\eta(q) \geq s^{-q}$.
\end{sublemm}

\begin{proof}  There are at most $s^q$ elements of $G(l(\eta)-1+q)$ which are contained in $R_\eta$, i.e. $|\Sigma_\eta(q)| \leq s^q$, because there are only $s$ possible letters to append (on the right) to any finite string.

Let $R_\a \in G(l(\eta)-1+q)$ be such that $\E_\eta(q) = \frac {\sigma(R_\a)} {\sigma(R_\eta)}$.  Then $R_\a$ has the largest $\sigma$ of any element of $G(l(\eta)-1 +q)$ contained in $R_\eta$.  Because all elements of the same generation have pairwise disjoint interiors and $\partial$ is $\sigma$-null, $s^q \sigma(R_\a) \geq \sum_{\b \in \Sigma_\eta(q)} \sigma(R_\b) = \sigma(R_\eta)$.
\end{proof}

Hence, $\varepsilon_\eta(q) \geq \frac{r}{C s^q}$.

Define $$H_k = H_k(Q) = \{R_\a \mid \a \in \Sigma(Q + k) \textrm{ and }  T^n(R_\a) \cap \textrm{Int}R_{\gamma(Q)} = \emptyset $$ $$\textrm{ for every } n = 0, 1, \cdots, k\}.$$

There exists a least $P \in \NN$ such that  \begin{enumerate} \item $P \geq 4s-2$ and \item $\frac{4 C^4 \delta_T \lambda^{-P}}{\varepsilon(1) \varepsilon(2s) \varepsilon(7s+2)r d(R_{\max})} < \frac{r \varepsilon(1)} {2C^2}.$ \end{enumerate} Also, there exists a least $L_0 \in \NN$ such that $s^{-1/L_0} \geq \lambda^{-1/2}.$

\begin{sublemm}\label{sublemmExpand} For every $q \in \NN$, there exists a least $p \in \NN$ such that any allowed choice of $B_{J+p}$ is a subset of $R_\delta$ for some $\delta \in \Sigma_\eta(q)$. 
\end{sublemm}

\begin{proof}  Recall the definition of $B^+$ from the proof of Lemma~\ref{lemmFit}.  

Note that, by Lemma~\ref{lemmFit}, $p \geq 1$.  It suffices to show the sublemma for some $p$; that a least such $p$ exists is then immediate.  Let $\b$ be an element of $\Sigma_\eta(q)$ with smallest $\sigma$.  Then $\frac {d(R_\b)}{d(R_\eta)} \geq \frac{r}{C s^q}$.  Thus, there exists a large integer $t$ such that $\frac{r}{C s^q} d(R_\eta) > B_{J+t}.$  Hence, $|B_{J+t} \cap \partial_{G_\b}| \leq 1$ (i.e. there is at most one boundary point of the proper weight in $B_{J+t}$). Pick $W_{J+t} \subset B^+_{J+t}$.  Hence, let $p=t+1$.\end{proof}

By Sublemma~\ref{sublemmExpand}, there exists a $L_1 \in \NN$ such that $B_{J+L_1}$ is contained in an element of $G(2P)$.  Let $L:= \max(L_0,L_1)$.

By Lemma~\ref{lemmFit}, there exists a least $N \in \NN$ for which we can choose an element $R_\delta \in G(N-1)$ such that \begin{eqnarray}\label{eqnA}2d(R_\delta) \geq d(B_{J+L}) \geq \frac{\varepsilon(1)} C d(R_\delta).\end{eqnarray}   Also, there exists $R_{\delta k} \subset B_{J+L}$ for some letter $k$.  By construction, $R_{\delta k} \subset R_\eta$, and hence $R_\delta \subset R_\eta$ (because the generation that $R_\delta$ belongs to is later than or the same as that of $R_\eta$). 


Also, since $B_{J+L}$ is contained in an element of $G(2P)$, and $B^+_{J+L}$ (see the proof of Lemma~\ref{lemmFit} for the meaning of the notation) is contained in an element of $G(N-1)$, $N-1 \geq 2P$.

Pick an integer $Q >N$ as follows.   Choose integers $N_4 > N_3 > N_2 > N_1 \geq s+1$ as follows:  $\gamma_{N+N_1}$ is the next nondegenerate letter in $\gamma$ following $\gamma(N+s)$, $\gamma_{N+N_2} $ is the next nondegenerate letter in $\gamma$ following $\gamma(N+N_1)$, $\gamma_{N+N_3}$ is the next nondegenerate letter in $\gamma$ following $\gamma(N+N_2)$, and $\gamma_{N+N_4}$ is the next nondegenerate letter in $\gamma$ following $\gamma(N+N_3)$.   (By Corollary~\ref{corMaxBlock}, $4 + s \leq N_4\leq5s$.)  If $\gamma(N+N_4+1)$ is of the form $a \cdots ab$ for a general block $a$ and $b$ is either a general block not equivalent to $aa$ or a double general block not equivalent to $aa$, then $Q = N+N_1+1$; otherwise, choose $Q = N+N_4+1$. Hence, Lemma~\ref{lemmPartMatch} applies to $\gamma(Q)$.

Now, by (\ref{eqnA}), \begin{align*}(mn)^L d(B_J) & \geq \frac{\varepsilon(1)} C d(R_\delta) \geq \frac{\varepsilon(1)} C \varepsilon_\eta(N-1-N_0) d(R_{\eta}) \\ & \geq \frac{\varepsilon(1)} C \varepsilon_\eta(N-1-N_0) d(B_J)/2 \geq \frac{\varepsilon(1)} {2C} \varepsilon_\eta(Q) d(B_J) \\ & \geq \frac{r \varepsilon(1)} {2C^2} \E_\eta(Q) d(B_J) \geq \frac{r \varepsilon(1)} {2C^2} s^{-Q} d(B_J).\end{align*}

Thus, \begin{eqnarray}\label{eqnB}m \geq \frac{r \varepsilon(1)} {2C^2} \lambda^{-Q/2}.\end{eqnarray}

Since $Q-N \geq s+1$, $R_{\delta k}$ splits into at least two elements of $G(Q)$ by Corollary~\ref{corMaxBlock}.  One of these is not $R_{\gamma(Q)}$; call this element $R_\a$.  (Note that $Q \geq 8s-4$.)  By Lemma~\ref{lemmPartMatch}, there exist strings $b^0$ and $b^1$, each of length at most $s$, such that for any valid choice of letters $\b_0, \cdots,\b_k$, where $l(b^0) + l(b^1) + k +1 \leq Q$, no match of $\gamma(Q)$ with $\a b^0 b^1 \b_0 \cdots \b_k$ exists.  Thus, \begin{eqnarray}\label{eqnC}d(R_\delta) \geq d(R_{\a b^0 b^1}) \geq \frac{\varepsilon(7s+2)} C d(R_\delta)\end{eqnarray} by Lemma~\ref{lemmBDII}.  Consequently, by (\ref{eqnA}) and (\ref{eqnC}), \[d(R_{\a b^0 b^1}) \geq n d(B_{J+L}) \geq \frac{\varepsilon(7s+2) \varepsilon(1)} {2C^2} d(R_{\a b^0 b^1}).\] 

Since White must choose $W_{J+L} \in B_{J+L}^n$, White picks $W_{J+L} \subset R_{\a b^0 b^1}$.  Black now chooses $B_{J+L+1} \in W_{J+L}^m$; hence, \begin{eqnarray}\label{eqnF} m d(R_\a) \geq d(B_{J+L+1}) \geq \frac{m \varepsilon(7s+2) \varepsilon(1) \varepsilon(2s)} {2C^3} d(R_\a).\end{eqnarray}

By Lemma~\ref{lemmFit} again, there exists $N' \in \NN$ for which we can choose an element $R_{\eta'} \in G(N'-1)$ such that \begin{eqnarray}\label{eqnG}2 d(R_{\eta'}) \geq d(B_{J+L+1}) \geq \frac{\varepsilon(1)} C d(R_{\eta'}).\end{eqnarray}  Also, there exists $R_{\eta' k'} \subset B_{J+L+1}$ for some letter $k'$.  Now, by construction, $R_{\eta'} \subset R_{\a b^0 b^1}$.  Hence, $N'-1 \geq Q+l(b^0) + l(b^1)$.  Define $q_{J+L+1} = N' -Q$.

\begin{sublemm}  $l(b^0) + l(b^1) < q_{J+L+1} \leq Q$.
\end{sublemm}

\begin{proof}  Assume that $q_{J+L+1} \geq Q+1$.  We have \[d(R_{\eta'}) \leq \E(q_{J+L+1}-1) C d(R_\a) \leq \E(Q) C d(R_\a).\]  Let $R_\b \in G(Q)$ such that $\E(Q) = \frac {d(R_\b)}{d(R_{\b_0})}.$  Since $d(R_{\b_0}) \geq r d(R_{\max})$ and (\ref{MP13}) holds, $d(R_{\eta'})  \leq \frac{\delta_T \lambda^{-Q}}{r d(R_{\max})} C d(R_\a)$.

Hence, by (\ref{eqnF}) and (\ref{eqnG}), \[m \leq \frac{4 C^4 \delta_T \lambda^{-Q}}{\varepsilon(1) \varepsilon(2s) \varepsilon(7s+2)r d(R_{\max})} \]\[\leq \frac{4 C^4 \delta_T \lambda^{-P}}{\varepsilon(1) \varepsilon(2s) \varepsilon(7s+2)r d(R_{\max})} \lambda^{-Q/2} < \frac{r \varepsilon(1)} {2C^2} \lambda^{-Q/2},\] a contradiction of (\ref{eqnB}).
\end{proof}

Consequently, by Lemma~\ref{lemmPartMatch}, no match of $\gamma(Q)$ with any valid string beginning with $\a b^0 b^1$ in $\Sigma(Q+q_{J+L+1})$ exists.

Now, by construction, $B_{J+L+1}$ contains an element (i.e. $R_{\eta'k'}$ of $G(Q + q_{J+L+1})$) whose string begins with $\a b^0 b^1$.  Let $\a':= \eta'k'$.  Thus, $$R_{\a'} \in H_{q_{J+L+1}}.$$  By Lemma~\ref{lemmPartMatch}, there exist strings $b'^0$ and $b'^1$, each of length at most $s$, such that for any valid choice of letters $\b'_0, \cdots,\b'_k$, where $l(b'^0) + l(b'^1) + k +1 \leq Q$, no match of $\gamma(Q)$ with $\a' b'^0 b'^1 \b'_0 \cdots \b'_k$ exists.  Thus, \[d(R_{\eta'}) \geq d(R_{\a' b'^0 b'^1}) \geq \frac{\varepsilon(7s+2)} C d(R_{\eta'})\] by Lemma~\ref{lemmBDII}.  

As before, White chooses $W_{J+L+1} \subset R_{\a' b'^0 b'^1}$.  Continue thus by induction.

Therefore, we obtain \begin{equation}\label{eqnD} \cap_{p=J+L+1}^\infty W_p \in \cap_{p=J+L+1}^\infty (\cup H_{\sum_{j=J+L+1}^p q_j}(Q)).\end{equation}  The latter set is a set of points whose forward orbits avoid Int$R_{\gamma(Q)}$.  

Denote $$A_\gamma := \cup_{Q=2P+2}^\infty \cap_{p=J+L+1}^\infty (\cup H_{\sum_{j=J+L+1}^p q_j}(Q)).$$   By (\ref{eqnD}), $A_\gamma$ is $(n,m)$-winning for all $0 < m <1$.

If $\gamma$ is the unique representation of $x_0$, then, by Lemma~\ref{lemmReps}, $x_0 \in$ Int$R_{\gamma(Q)}$ for all $Q \in \NN \cup \{0\}$.  Hence, $A_\gamma$ is the set of points whose forward orbits avoid a neighborhood of $x_0$.  Thus, we are done for $x_0$ in this case.

If $\gamma^1, \cdots, \gamma^{r_0}$ are representations of $x_0$ for $r_0 >1$, then $A:=\cap_{t=1}^{r_0} A_{\gamma^t}$ is $n$-winning.  The set of boundary points is the countable union of finite sets and hence countable (for $M=S^1$).  Thus, $A \backslash \partial$ is $n$-winning.

Let $x \in A \backslash \partial$.  Then there exist some $Q_1, \cdots, Q_{r_0}$ such that $$\Or^+_T(x) \cap\textrm{Int}R_{\gamma^t(Q_t)} = \emptyset.$$  Let $Q := \max(Q^t)$.  By Lemma~\ref{lemmBndPoints}, there exists an open neighborhood $U$ of $x_0$ such that $U \subset \cup_{t=1}^{r_0} \textrm{Int}R_{\gamma^t(Q)} \cup \partial$.

If there exists $q \geq 0$ such that $T^q(x) \in \partial$, then, by Lemma~\ref{lemmBndToBnd}, $x \in \partial$, a contradiction.  Thus, $\Or^+_T(x) \cap \partial = \emptyset.$  Hence, $\Or^+_T(x) \cap U = \emptyset.$  Thus, $A \backslash \partial$ is a set of points whose forward orbits avoid an open neighborhood of $x_0$.  \end{proof}

We have the following corollary.  Let $\{T_n\}_{n=1}^N$ be any finite set of $C^2$-expanding self-maps of $S^1$.  For each map, choose, via Lemma~\ref{lemmConnMPCircle}, a Markov partition with small diameter with only intervals as elements.  Let $s_n$ be the number of elements of the $n^{th}$ Markov partition.  Let $\varepsilon_n$ be the lower constant of bounded distortion for the $n^{th}$ Markov partition. Let $C_n$ be the constant (from Theorem~\ref{thmBDI}) for the $n^{th}$ Markov partition.  Let $\a = \min(\frac {\varepsilon_1(7s_1+2)}{2C_1}, \cdots, \frac {\varepsilon_N(7s_N+2)}{2C_N}) > 0$.

\begin{coro}  \label{cor1DimWin} For each $n$, choose a (at most) countably infinite set $\{x^n_i\}_{i=1}^{\infty} \subset S^1$.  Then \begin{eqnarray} \label{eqnE} \bigcap_{n=1}^N \bigcap_{i=1}^\infty F_{T_n}(x^n_i)\end{eqnarray} is $\a$-winning.
\end{coro}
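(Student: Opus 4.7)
The plan is to derive Corollary~\ref{cor1DimWin} directly from Theorem~\ref{thmWin2} together with the two Schmidt game properties SG2 (countable intersections of $\a$-winning sets are $\a$-winning) and SG3 (if a set is $\a$-winning, it is also $\a'$-winning for $0 < \a' \leq \a$).

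First, I would invoke Theorem~\ref{thmWin2} separately for each map $T_n$: for every $n \in \{1,\ldots,N\}$ and every $i \in \NN$, the set $F_{T_n}(x^n_i)$ is $\frac{\varepsilon_n(7s_n+2)}{2C_n}$-winning. Since by definition
\[\a = \min\Bigl(\frac{\varepsilon_1(7s_1+2)}{2C_1},\ldots,\frac{\varepsilon_N(7s_N+2)}{2C_N}\Bigr) > 0,\]
we have $0 < \a \leq \frac{\varepsilon_n(7s_n+2)}{2C_n}$ for each $n$, so by SG3 every $F_{T_n}(x^n_i)$ is in fact $\a$-winning.

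Second, observe that the total intersection in (\ref{eqnE}) is indexed by the countable set $\{1,\ldots,N\} \times \NN$. Therefore it is a countable intersection of $\a$-winning sets, and SG2 yields that the intersection is itself $\a$-winning.

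There is no genuine obstacle here: the corollary is a direct bookkeeping consequence of Theorem~\ref{thmWin2} and the elementary game-theoretic properties SG2 and SG3. The only care needed is in observing that the countability is preserved (finite $\times$ countable is countable) and that $\a > 0$, which is immediate since it is the minimum of finitely many strictly positive quantities.
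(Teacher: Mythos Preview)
Your proposal is correct and is exactly the argument the paper has in mind: the corollary is stated without proof immediately after Theorem~\ref{thmWin2}, and the intended derivation is precisely the combination of SG3 (to reduce each $F_{T_n}(x^n_i)$ to the common constant $\a$) followed by SG2 (to handle the countable intersection over $\{1,\ldots,N\}\times\NN$).
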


\begin{ques}\label{quesonehalfwin}  Is $F_T(x_0)$ $\a$-winning for some $\a$ independent of the choice of Markov partition and of $T$ itself (such as $\a = 1/2$ for example)?
\end{ques}

\section{Conclusion}\label{secConclude}  In this note, we have presented a way of proving Theorem~\ref{thmfullHD1} using elementary methods of Markov partitions.  As mentioned, A. G. Abercrombie and R. Nair have another method using higher dimensional nets and Kolmogorov's consistency theorem~\cite{AN}.  In addition to our result, their method also gives a lower bound for the Hausdorff dimension of the set of points whose forward orbits miss balls (of a radius which one can choose, subject to certain constraints) around the points $x_1, \cdots, x_p$.  Instead of constructing good strings as we do, they construct a certain Borel measure on the set of points whose forward orbits miss the desired balls.  This measure encapsulates the iterations of $T$ and is zero on the strings which come too close to hitting the balls to be avoided.  Thus, they are freed from considering matching.  

Our method, on the other hand, is concerned with matching.  In particular, the use of the No Matching lemma requires manipulation and coordination of elements of certain generations of the Markov partition, which the author only knows how to do when the points being missed are contained in these elements.  If one would like to show a result concerning missing balls around points, then one must be able to manipulate and coordinate elements adjacent to the elements which contain the points being missed.  This requirement is most clearly seen when one wishes to miss an interior point, as how close the point is to the boundary of the element (of the requisite generation of the Markov partition) determines how large a ball around this point our method allows us to miss.  This sort of variation does not seem to allow us to give, without further modifications to our method, a lower bound like Abercrombie and Nair's.

However, our elementary method is very geometric since we handle elements of generations of the Markov partition directly.  It is this geometric nature that allows us to generalize, in dimension one, Theorem~\ref{thmfullHD1} and Abercrombie and Nair's result to winning sets.  Doing so has allowed us to obtain a considerable strengthening:  the countable intersection property.  With this property, we can generalize to finitely many maps and countably many points, as precisely stated in Corollary~\ref{cor1DimWin}.  (If we can answer Question~\ref{quesonehalfwin} affirmatively, then we can generalize to countably many maps.) Can we also generalize to winning sets for higher dimensional manifolds, and can we prove a similar result for Anosov diffeomorphisms?  Only starting with Subsection~\ref{sec1DimProof} did we specialize to dimension one.  Much of the theory works for higher dimensions.  How much will work and with what modifications?

\section*{Acknowledgements} The author would like to thank his advisor, Professor Dmitry Kleinbock, for helpful discussions.  The author would also like to thank Professor Mariusz Urba\'nski for his helpful comments.

\end{document}